\newtheorem{thm}{Theorem}[section]
\newtheorem{coro}[thm]{Corollary}
\newtheorem{lemma}[thm]{Lemma}
\newtheorem{prop}[thm]{Proposition}
\newtheorem{remark}[thm]{Remark}
\numberwithin{equation}{section}
\newcommand{\ep}{\epsilon}
\newcommand{\al}{\alpha}
\newcommand{\ue}{u^\epsilon}
\newcommand{\ve}{v^\epsilon}
\newcommand{\bwe}{\bar{w}^\epsilon}
\newcommand{\dbvf}{\dot{\overline{V}^\ep_1}}
\newcommand{\dbvs}{\dot{\overline{V}^\ep_2}}
\newcommand{\dbvt}{\dot{\overline{V}^\ep_3}}
\newcommand{\xe}{\xi^\epsilon}
\newcommand{\intot}{\int_{0}^{t}}
\newcommand{\intobtxl}{\int_{0}^{T}\int_{|x|<1}}
\newcommand{\intobtxg}{\int_{0}^{T}\int_{|x|\geq 1}}
\newcommand{\intos}{\int_{0}^{s}}
\newcommand{\supt}{\sup\limits_{0\leq t\leq T}}
\newcommand{\supe}{\sup\limits_{0<\epsilon\leq 1}}
\newcommand{\ube}{U^\epsilon}
\newcommand{\vbe}{V^\epsilon}
\newcommand{\bube}{\bar{U}^\epsilon}
\newcommand{\bvbe}{\bar{V}^\epsilon}
\newcommand{\re}{\rho^\epsilon}
\newcommand{\we}{w^\epsilon}
\title{\textbf{Convergence rate of   Smoluchowski--Kramers approximation with  stable L\'{e}vy noise }\thanks{This work is supported by NSFC Grant No.  12371243.}}
\author{Qingming Zhao\thanks{Corresponding author: qingming.zhao@smail.nju.edu.cn}~~}
\author{~Wei Wang\thanks{wangweinju@nju.edu.cn}}
\affil{School of Mathematics, Nanjing University, Nanjing 210093, P. R. China}
\date{} 
\begin{document}
	\maketitle

	\noindent{\small{\hspace{1.1cm} }}

	\noindent \textbf{Abstract~~~}   The small mass limit of the Langevin equation perturbed by $\al$-stable L\'{e}vy noise is considered by rewriting it in the form of slow-fast system, and spliting the fast component into three parts, where $\al\in(1,2).$ By exploring the three parts respectively,  the approximation equation is derived. The convergence is either in the sense of uniform metric or in the sense of Skorokhod metric, depending on how regular the noise is. In the former case, we obtain the convergence rate.
	\\[2mm]
	\textbf{Keywords~~~}{Langevin equation}, {Smoluchowski--Kramers approximation}, {Averaging}, {Singular pertubation}
	\\[2mm]
	\\
	\textbf{2020 Mathematics Subject Classification~~~}60G51, 60H10

	\section{Introduction}
	
	Smoluchowski--Kramers (SK for short) approximation is initially proposed by Smoluchowski\cite{SMOL16} and Kramers\cite{KRAM40} to derive an effective approximation to a Langevin equation which describes the motion of a particle with small mass. Roughly speaking,  the equation
	$$\ep \ddot{u^\ep}+\dot{u^\ep}=b(u^\ep)+\sigma(u^\ep)\dot{W}$$
	is  approximated, as $\ep\rightarrow 0$, in some sense by the equation 
	$$\dot{u}=b(u)+\sigma(u)\dot{W}.$$
	Formally, the limit equation is obtained by dropping the term $\ep \ddot{u}$\,. 
	
	There is fruitful work on SK approximation for Langevin equations with Gaussian white noise\cite[e.g.]{CX22, FH11, FH13, SW21, SW22, SWW24,LJW24}. The case that $W$ is an infinite dimensional Brownian motion is firstly studied by Cerrai and Freidlin\cite{CF06A, CF06B}. There is also some work concerned with SK approximation with colored noise which is highly oscillating in time \cite{HOTT13,HMV15}  or with L\'{e}vy noise \cite{ZHAN08,chaos24}. In this paper, we consider the following Langevin equation driven by a stable L\'{e}vy process 
	\begin{equation}
		\begin{cases}
			\ep\ddot{\ube}(t)+\dot{\ube}(t)=f(\ube(t))+\ep^\theta\dot{L}(t),\\
			\ube(0)=u_0, \quad \dot{\ube}(0)=v_0.
		\end{cases}\label{wave}
	\end{equation}
	Here, $0\leq\theta<1$ is a constant, and $L$ is an $\al$-stable process with $1<\al<2$, whose properties are detailed in Section~\ref{preli}. System (\ref{wave}) describes the motion of a particle with mass $\ep$ in an athermal fluctuation environment. Equation (\ref{wave}) can also be seen as a singularly perturbed    differential equation  with a random noise, which has attracted many researchers' interest~\cite[e.g.]{LW09, LRW11}.  Stable process is an important class of L\'{e}vy processes due to its self-similarity and scaling property\cite{KALL97, SATO99}. When the noise is a stable process, there is significant difference comparing to the Gaussian case. For example, stable process does not have finite second-order moment in general, and its L\'evy measure is infinite. Besides, the Garcia-Rademich-Rumsey theorem is invalid in the non-Gaussian case, which leads to remarkable difficulty when showing the tightness. 
	
	Formally,  the effective approximation model of (\ref{wave}) can also be obtained by dropping the $\ep \ddot{\ube}$ term, that is, 
	\begin{equation}
		\dot{\bar{U}}^\ep(t)=f(\bube(t))+\ep^\theta\dot{L}(t),\quad \bube(0)=u_0. \label{aveq}
	\end{equation}
	Obviously, the statement above reduces to the classical SK approximation when $\theta=0$. In this case, (\ref{aveq}) does not depend on $\ep$ and we write $\bar{U}$ instead of $\bube.$ Here we introduce a splitting technique of the solution \cite{LRW11} to show the approximation rigorously. Moreover, we also obtain the convergence rate.
	
	Rewrite the equation (\ref{wave}) as
	\begin{equation}
		\begin{cases}
			\dot{U}^\epsilon(t)=\vbe(t),\\
			\dot{V}^\epsilon(t)=\ep^{-1}[-\vbe(t)+f(\ube(t))]+\ep^{\theta-1}\dot{L}(t),\\
			\ube(0)=u_0,\quad  \vbe(0)=v_0.
		\end{cases}\label{slowfast}
	\end{equation}
	Equation (\ref{slowfast}) has a form of slow-fast system \cite{DW14}. Inspired by a splitting technique introduced by Lv et al.~\cite{LRW11}, we make the following important decomposition, which makes the analysis to (\ref{slowfast}) considerably more clear	
	\begin{equation}
		\begin{cases}
			\dbvf(t)=-\ep^{-1}\bvbe_1(t), \\
			\dbvs(t)=-\ep^{-1}[\bvbe_2(t)-f(\ube(t))],\\
			\dbvt(t)=-\ep^{-1}\bvbe_3(t)+\ep^{-\frac{1}{\al}}\dot{L}(t),\\
			\bvbe_1(0)=\ep v_0, \quad \bvbe_2(0)=0, \quad \bvbe_3(0)=0.
		\end{cases}\label{bveq}
	\end{equation}
	Direct calculation yields 
	\begin{equation}
		\vbe=\ep^{-1}\bvbe_1+\bvbe_2+\ep^{\theta+\frac{1}{\al}-1}\bvbe_3\,. \label{decomposition}
	\end{equation}

	The paper is organized as follows. In Section \ref{preli}, we impose some assumptions and state the main result. In Section \ref{severaltechnicallemma}, we give moment estimates and establish the tightness of $(\ube)$. After these preparation, we prove the main result in Section \ref{proofofmain}.
	\section{Preliminary and Main Result}\label{preli}
	Let $(\Omega,\mathcal{F},\mathbb{P})$ be a complete probability space, on which there is a filtration $(\mathcal{F}_t)_{0\leq t\leq T}$ satisfying the usual condition, where $0<T<\infty$ is fixed throughout the paper.  In the rest, $|x|:=\sqrt{\sum\limits_{i=1}^{d}x_i^2}$ for each $x=(x_1,...,x_d)\in\mathbb{R}^d,$ and $||A||:=\sup\limits_{x\in\mathbb{R}^d,|x|=1}|Ax|$ for each matrix $A\in\mathbb{R}^{d\times d}.$ Let $L$ be a L\'{e}vy process on $(\Omega,\mathcal{F}, (\mathcal{F}_t)_{0\leq t\leq T}, \mathbb{P}).$ Recall that the characteristic function of a rotation invariant $\al$-stable random vector $Z$ is
	$$\mathbb{E}e^{i(Z,h)}=e^{-c|h|^\al},$$
	for some $c>0$ \cite[Theorem 14.14]{SATO99}. An isotropic $\al$-stable L\'{e}vy process $L$ is a L\'{e}vy process such that $L(1)$ is a rotation invariant $\al$-stable random vector. In this case, 
	$$\mathbb{E}e^{i(L(t),h)}=e^{-ct|h|^\al},$$
	for each $h\in \mathbb{R}^d$ and $0\leq t\leq T$ \cite[Theorem 1.3.3]{APPL09}. Write $L=(L^1,...,L^d).$ A direct computation of characteristic function yields that each component $L^k$ is a one-dimensional isotropic $\al$-stable L\'evy process, and that they have the same distribution. It is well-known that for a stable random vector with L\'evy measure $\nu$, $$\int_{|x|\geq1}|x|^p\nu(dx)<\infty$$ if and only if $p<\al$ \cite[Example 25.10]{SATO99}. In this paper, we assume that $L$ is an isotropic $\al$-stable L\'evy process with $1<\al<2$. The L\'evy measure of $L$ is denoted by $\nu$. The L\'evy-It\^o decomposition of $L$ is written as
	\begin{equation}\label{lvdecom}
		L(t)=\intot\int_{|x|<1} x\tilde{N}(dsdx)+\intot\int_{|x|\geq1}xN(dsdx).
	\end{equation}
	The Poisson random measure, compensated Poisson random measure, and the L\'evy measure of the component $L_k$ is denoted by $N_k$, $\tilde{N}_k$ and $\nu_k$, respectively, $k=1,2,...d.$
	
	Let us recall some basic property of the Skorokhod space $\mathbb{D}:=\mathbb{D}([0,T];\mathbb{R}^d)$. The Skorokhod space $\mathbb{D}$ consists of all c\`adl\`ag $\mathbb{R}^d$-valued function on $[0,T].$ For $x,y\in\mathbb{D}$, set $$d^o(x,y):=\inf\limits_{\lambda\in\Lambda}\max\{||\lambda||^o, ||x-y\circ \lambda||_\infty\},$$
	where
	\begin{equation*}
		||\lambda||^o:=\sup\limits_{0\leq s\leq t}\Big|\log\frac{\lambda(t)-\lambda(s)}{t-s}\Big|,\quad ||f||_\infty:=\supt|f(t)|,
	\end{equation*} 
	 and $\Lambda$ consists of all strictly increasing continuous bijection on $[0,T].$ It is known that $d^o$ defines a complete separable metric on $\mathbb{D}$. For more details of the space $\mathbb{D}$, we refer to \cite[Chapter 3]{BIL13}.
	
	We make the following assumption.

	$(\mathbf{A})$ $f:\mathbb{R}^d\rightarrow\mathbb{R}^d$ is globally Lipschitz, that is, there exists a constant $L_f>0$ such that for all $x,y\in\mathbb{R}^d,$
	$$|f(x)-f(y)|\leq L_f|x-y|.$$
	
	In the followings, $C$ denotes constant whose value may change from line to line. Unless otherwise stated, the value of $C$ may depend on $T$ and the L\'{e}vy measure $\nu$, but it never depends on $\ep$. We use the notation $x\lesssim y$ to indicate that there exists a constant $C$ such that $x\leq Cy$. For two random elements $X$ and $Y$, we write $\mathcal{L}(X)$ to denote its distribution, and $X\overset{d}{=}Y$ means $\mathcal{L}(X)=\mathcal{L}(Y).$
	
	Our main result is the following theorem.
	
	\begin{thm}\label{mainresult}
		(i) Let $0\leq\theta<1.$ Under assumption $\mathbf{(A)}$,
		\begin{equation}
			\mathbb{E}\supt|\ube(t)-\bube(t)|\lesssim \ep^\theta.
		\end{equation} 	
		(ii) Let $\theta=0.$ Under assumption $\mathbf{(A)}$,
		\begin{equation}
			\lim\limits_{\ep\rightarrow 0}d^o(\ube,\bar{U})=0\quad\text{in probability}.
		\end{equation}
	\end{thm}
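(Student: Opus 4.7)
My plan is to convert the second-order system (\ref{wave}) into a scalar integral equation for $\ube$ of the same form as (\ref{aveq}) and then to control the residue by a Gronwall-type estimate. Solving the three linear equations of (\ref{bveq}) by variation of constants gives closed Duhamel expressions $\bvbe_1(t)=\ep v_0 e^{-t/\ep}$, $\bvbe_2(t)=\ep^{-1}\intot e^{-(t-s)/\ep}f(\ube(s))\,ds$, and $\bvbe_3(t)=\ep^{-1/\al}\intot e^{-(t-s)/\ep}\,dL(s)$. Substituting these into (\ref{decomposition}) and applying deterministic/stochastic Fubini to $\ube(t)=u_0+\intot\vbe(s)\,ds$ yields
\begin{equation*}
\ube(t)=u_0+\intot f(\ube(r))\,dr+\ep^\theta L(t)+R^\ep(t),
\end{equation*}
where
\begin{equation*}
R^\ep(t)=\ep v_0(1-e^{-t/\ep})-\intot f(\ube(r))e^{-(t-r)/\ep}\,dr-\ep^\theta I^\ep(t),\quad I^\ep(t):=\intot e^{-(t-r)/\ep}\,dL(r).
\end{equation*}
Subtracting the integral form of (\ref{aveq}), hypothesis $(\mathbf{A})$ combined with Gronwall's inequality gives $\supt|\ube(t)-\bube(t)|\leq e^{L_f T}\supt|R^\ep(t)|$, so part (i) reduces to proving $\mathbb{E}\supt|R^\ep(t)|\lesssim\ep^\theta$.

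The boundary layer term is bounded pointwise by $\ep|v_0|$, and $|\intot f(\ube(r))e^{-(t-r)/\ep}dr|\leq\ep(|f(0)|+L_f\sup_r|\ube(r)|)$ together with the $L^1$ moment estimate for $\ube$ proved in Section \ref{severaltechnicallemma} gives expectation $\lesssim\ep$. For the stochastic piece $I^\ep$ I would use the L\'evy-It\^o split (\ref{lvdecom}): the large-jump part equals $\sum_{s_i\leq t}\Delta L(s_i)\mathbf{1}_{|\Delta L(s_i)|\geq 1}e^{-(t-s_i)/\ep}$, dominated in sup by $\sum_{s\leq T}|\Delta L(s)|\mathbf{1}_{|\Delta L(s)|\geq 1}$, which has expectation $T\int_{|x|\geq 1}|x|\nu(dx)<\infty$ because $\al>1$; the compensated small-jump contribution is handled by Kunita's $L^p$ inequality for some $p\in(1,\al)$, leading to $\mathbb{E}\supt|I^\ep_{\mathrm{small}}|^p\lesssim \intot\int_{|x|<1}|x|^p\nu(dx)\,ds$, finite uniformly in $\ep$. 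Together these yield $\mathbb{E}\supt|I^\ep|\leq C$ independent of $\ep$, so the last summand of $R^\ep$ contributes $O(\ep^\theta)$; since $\theta<1$ the first two terms are $O(\ep)=o(\ep^\theta)$. This is the main technical step, made hard precisely by the failure of Garcia-Rademich-Rumsey in the non-Gaussian setting that the introduction highlights.

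For part (ii) the rate from (i) degenerates to $O(1)$; moreover $\ube$ is continuous while $\bar U$ inherits the jumps of $L$, so the comparison must take place in the Skorokhod rather than uniform topology. Here I would use the tightness of $\{\ube\}$ in $\mathbb{D}$ established in Section \ref{severaltechnicallemma} to extract weakly convergent subsequences, identify the limit through the integral representation of Step 1 (the deterministic pieces of $R^\ep$ vanish uniformly, while $I^\ep$ encodes only an $\ep$-scale exponential smoothing of the jumps of $L$), and apply pathwise uniqueness to the limiting jump SDE $U=u_0+\int_0^\cdot f(U)\,dr+L$ to conclude that any limit must be $\bar U$. A Gy\"ongy--Krylov-type argument on pairs of subsequences then upgrades convergence in distribution to convergence in probability. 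The main obstacle will be showing that the rapid continuous rise of $\ube$ over windows of size $O(\ep)$ around each jump of $L$ can be absorbed by a time change $\lambda\in\Lambda$ with $\|\lambda\|^o\to 0$ so as to match the instantaneous jump of $\bar U$; this is what forces us to drop the uniform metric and work in $d^o$, and it is where the sparse large-jump structure of the $\al$-stable process is used in an essential way.
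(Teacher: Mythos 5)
Your reduction of part (i) is the paper's own argument in different clothing: the Duhamel formulas you write for $\bvbe_1,\bvbe_2,\bvbe_3$ are exactly the splitting (\ref{bveq})--(\ref{decomposition}), your remainder $R^\ep(t)$ consists precisely of the paper's three terms $\ep^{-1}\intot\bvbe_1(s)ds$, $\ep\bvbe_2(t)$ and $\ep^\theta\bwe(t)$, and both proofs close with Gronwall; likewise your plan for part (ii) (tightness in $\mathbb{D}$, Skorokhod representation, identification of the limit equation, pathwise uniqueness, Gy\"ongy--Krylov) is the paper's proof. The one place where you genuinely deviate is the key uniform bound $\supe\mathbb{E}\supt|I^\ep(t)|<\infty$ for $I^\ep(t)=\intot e^{-(t-r)/\ep}dL(r)$ (the paper's $\bwe$, Lemma \ref{thmforlinearEsup}), and there your argument has a real gap. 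First, for an $\al$-stable L\'evy measure the quantity $\int_{|x|<1}|x|^p\nu(dx)$ is \emph{infinite} for every $p<\al$: near the origin it is finite precisely when $p>\al$, while the moment condition ``$p<\al$'' recalled in Section \ref{preli} concerns the region $\{|x|\geq1\}$. So the bound you claim from Kunita's inequality with $p\in(1,\al)$ is vacuous; for the compensated small jumps one must use an exponent above $\al$, e.g.\ $p=2$, where $\int_{|x|<1}|x|^2\nu(dx)<\infty$. Second, and more structurally, $t\mapsto\intot\int_{|x|<1}e^{-(t-r)/\ep}x\tilde{N}(drdx)$ is not a martingale, because the integrand depends on the upper limit $t$; neither Kunita's maximal inequality nor Burkholder--Davis--Gundy applies to $\supt$ of such a process, and applying them to the martingale $\intot\int_{|x|<1}e^{r/\ep}x\tilde{N}(drdx)$ and then multiplying by $e^{-t/\ep}$ costs an uncontrolled factor of order $e^{T/\ep}$.

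Both defects are repairable, but some additional idea is needed. The paper's route is to regard $I^\ep=\bwe$ as the solution of the Ornstein--Uhlenbeck equation $d\bwe(t)=-\ep^{-1}\bwe(t)dt+dL(t)$ (the rescaled version of (\ref{defofw})) and to apply It\^o's formula to $\phi(x)=(|x|^2+1)^{p/2}$, so that the $\ep^{-1}$ drift term enters with a favorable sign and the small jumps are controlled through $D^2\phi$ and $\int_{|x|<1}|x|^2\nu(dx)$; this is the proof of (\ref{esforbwe}). Alternatively, your L\'evy--It\^o splitting can be saved by an integration by parts in the convolution, $I^\ep(t)=L(t)-\ep^{-1}\intot e^{-(t-s)/\ep}L(s)ds$, which gives the pathwise bound $\supt|I^\ep(t)|\leq 2\supt|L(t)|$, and $\mathbb{E}\supt|L(t)|<\infty$ is exactly (\ref{esforL}) (small jumps by BDG with exponent $2$, large jumps using $\int_{|x|\geq1}|x|\nu(dx)<\infty$, i.e.\ $\al>1$). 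With that estimate in hand the rest of your part (i) goes through, including the pathwise Gronwall step. For part (ii), note that the identification of the limit also requires the pointwise decay $\mathbb{E}|I^\ep(t)|\to0$ for each fixed $t$, which the paper obtains from the moment formula for stable stochastic integrals in \cite{RW06}; the uniform-in-$t$ analogue is false (this is the content of the paper's closing corollary), but no explicit construction of time changes is needed once you argue, as you propose, through tightness, pathwise uniqueness and Gy\"ongy--Krylov.
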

	\begin{remark}
		Obviously, part (i) of Theorem \ref{mainresult} does not give a convergence result in the case $\theta=0$. One might expect that there is the uniform convergence result
		$$\lim\limits_{\ep\to0}\mathbb{E}\supt|\ube(t)-\bar{U}(t)|=0.$$
		 However, we show that the above convergence does not hold at the end of Section \ref{proofofmain}.
	\end{remark}
	\section{Moment Estimate and Tightness of $(\ube)$}\label{severaltechnicallemma}
	In this section, we establish several moment estimates, which are essential to establish the tightness of $(\ube)$. We start with a well-posedness result. 
	\begin{lemma}\label{lemmaexistence}
		For each $0<\ep\leq 1$, the equation (\ref{slowfast}) admits a unique strong solution.
	\end{lemma}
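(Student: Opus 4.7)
The plan is to exploit the additive structure of the noise to reduce the SDE system (\ref{slowfast}) to a pathwise integral equation, and then invoke classical Picard iteration. This sidesteps the fact that the $\al$-stable noise has no finite second moment, so the standard $L^2$-based SDE well-posedness theorems cannot be invoked directly.

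First, for each fixed $\omega\in\Omega$, I would introduce the change of variable $W^\ep(t):=\vbe(t)-\ep^{\theta-1}L(t)$. Under this substitution (\ref{slowfast}) becomes the random integral system
\begin{align*}
\ube(t) &= u_0+\int_0^t\bigl(W^\ep(s)+\ep^{\theta-1}L(s)\bigr)\,ds,\\
W^\ep(t) &= v_0+\ep^{-1}\int_0^t\bigl(-W^\ep(s)-\ep^{\theta-1}L(s)+f(\ube(s))\bigr)\,ds,
\end{align*}
in which the stochastic differential has been absorbed into an explicit c\`adl\`ag forcing. By assumption $(\mathbf{A})$ together with the local boundedness of the c\`adl\`ag path $L(\cdot,\omega)$, the right-hand side is globally Lipschitz in $(\ube,W^\ep)$ with Lipschitz constant depending only on $\ep$ and $L_f$ and independent of $\omega$.

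Next I would run a standard Banach fixed-point argument on $\mathbb{D}([0,T_0];\mathbb{R}^{2d})$ equipped with the supremum metric, with $T_0=T_0(\ep,L_f)$ chosen small enough that the Picard map is a strict contraction. Concatenating the solutions on $[0,T_0],[T_0,2T_0],\ldots$ exhausts $[0,T]$ and yields a unique pathwise c\`adl\`ag solution $(\ube,W^\ep)$ for almost every $\omega$. Measurability of the Picard iterates together with the $(\mathcal{F}_t)$-adaptedness of $L$ deliver the required adaptedness, and setting $\vbe:=W^\ep+\ep^{\theta-1}L$ returns the strong solution of (\ref{slowfast}); pathwise uniqueness is inherited from uniqueness of the ODE system on each fixed path.

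The only substantive obstacle I anticipate is the one mentioned at the outset: the infinite L\'evy measure and the lack of a second moment make stochastic-integration estimates for the fast equation delicate, so one cannot blindly cite an off-the-shelf $L^2$ result. The change of variable above is precisely what removes this issue, turning well-posedness into a purely deterministic Lipschitz ODE question handled by Gronwall's inequality alone.
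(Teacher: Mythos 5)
Your argument is correct, but it takes a different route from the paper. The paper simply rewrites (\ref{slowfast}) as a single SDE in $\mathbb{R}^{2d}$, $\dot{\psi^\ep}=\mathcal{A}^\ep\psi^\ep+\mathcal{F}^\ep(\psi^\ep)+\dot{\mathcal{L}}^\ep$, observes that the drift is globally Lipschitz under $(\mathbf{A})$, and cites the standard well-posedness theorem for Lévy-driven SDEs with Lipschitz coefficients \cite[Theorem 6.2.9]{APPL09}. Your premise that such off-the-shelf results are unavailable because the $\al$-stable noise lacks second moments is not quite right: those theorems treat the large jumps by interlacing (only the compensated small-jump part enters the $L^2$ estimates), so no moment condition on $\nu$ beyond $\int(|x|^2\wedge 1)\nu(dx)<\infty$ is needed, and the paper invokes exactly such a result. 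Your alternative — subtracting the noise, $W^\ep:=\vbe-\ep^{\theta-1}L$, to turn the system into a random ODE with c\`adl\`ag forcing, then running a pathwise Banach fixed-point argument on subintervals and recovering adaptedness from the Picard iterates — is legitimate precisely because the noise is additive, and it buys a self-contained, elementary proof that avoids any stochastic-integration machinery; the paper's citation is shorter but rests on the interlacing construction hidden in the reference. One cosmetic remark: the fixed point of your Picard map is in fact continuous (it is an integral of locally bounded paths), so you could equally work in $\mathbb{C}([0,T_0];\mathbb{R}^{2d})$; completeness of $\mathbb{D}$ under the uniform metric does hold, so your formulation is also fine, and pathwise uniqueness for the SDE does follow from ODE uniqueness since any solution $\vbe$ yields, after the same subtraction, a solution of the same random ODE.
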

	\begin{proof}
		Let $\psi^\ep:=\begin{pmatrix}
			\ube\\
			\vbe
		\end{pmatrix}.$ The equation (\ref{slowfast}) can be rewritten as
		\begin{equation}
			\dot{\psi^\ep}=\mathcal{A}^\ep\psi^\ep+\mathcal{F}^\ep(\psi^\ep)+\dot{\mathcal{L}}^\ep,\label{existence}
		\end{equation}
		where $\mathcal{A}^\ep:=\begin{pmatrix}
			0 & Id\\
			0 & -\ep^{-1}Id
		\end{pmatrix},$ 
		$\mathcal{F}^\ep\begin{pmatrix}
			u\\
			v
		\end{pmatrix}:=\begin{pmatrix}
			0\\
			\ep^{-1}f(u)
		\end{pmatrix}$
		and $\mathcal{L}^\ep:=\begin{pmatrix}
			0\\
			\ep^{\theta-1}L
		\end{pmatrix}.$
		Since $f$ is Lipschitz,  $\mathcal{A}^\ep+\mathcal{F}^\ep$ is also Lipschitz which leads to the existence and uniqueness~\cite[Theorem 6.2.9]{APPL09}.
	\end{proof}
	
	In order to give moment estimate for $\ube$, we first consider the linear part $\ue$ of (\ref{wave}), that is
	\begin{equation}\label{linearwave}
		\ep\ddot{\ue}+\dot{\ue}=\ep^\theta\dot{L},\quad \ue(0)=0, \dot{\ue}(0)=0.
	\end{equation}
	Similar to (\ref{slowfast}), we rewrite it as
	\begin{equation}\label{linearslowfast}
		\begin{cases}
			\dot{\ue}=\ve,\quad \ue(0)=0,\\
			\dot{\ve}=-\ep^{-1}\ve+\ep^{\theta-1}\dot{L},\quad\ve(0)=0.
		\end{cases}
	\end{equation}
	
	\begin{lemma}\label{thmforlinearEsup}
		$$\supe\mathbb{E}\supt|\ue(t)|<\infty.$$
	\end{lemma}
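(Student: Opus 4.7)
The plan is to solve the linear slow-fast system (\ref{linearslowfast}) in closed form and then split the driving noise into small- and large-jump pieces. Solving $\dot{v}^\epsilon=-\epsilon^{-1}v^\epsilon+\epsilon^{\theta-1}\dot L$ with $v^\epsilon(0)=0$ by variation of parameters gives $v^\epsilon(s)=\epsilon^{\theta-1}\int_0^s e^{-(s-r)/\epsilon}\,dL(r)$, and integrating $u^\epsilon(t)=\int_0^t v^\epsilon(s)\,ds$ and swapping the time integrals (legitimate because the exponential kernel is deterministic) yields the explicit representation
\begin{equation*}
u^\epsilon(t)=\epsilon^\theta\int_0^t\bigl(1-e^{-(t-r)/\epsilon}\bigr)\,dL(r).
\end{equation*}
Using the L\'evy--It\^o decomposition (\ref{lvdecom}), I would write $L=L_1+L_2$ with $L_1(t)=\int_0^t\int_{|x|<1}x\,\tilde N(ds\,dx)$ the compensated small-jump martingale and $L_2(t)=\int_0^t\int_{|x|\geq 1}x\,N(ds\,dx)$ the large-jump (compound Poisson) part, and decompose correspondingly $u^\epsilon=u_1^\epsilon+u_2^\epsilon$.

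For the large-jump piece, only finitely many jumps of size $\geq 1$ occur on $[0,T]$ and the kernel $1-e^{-(t-r)/\epsilon}$ lies in $[0,1]$, so $\sup_{t\leq T}|u_2^\epsilon(t)|\leq \epsilon^\theta\sum_{\tau_n\leq T}|\Delta L(\tau_n)|\mathbf{1}_{\{|\Delta L(\tau_n)|\geq 1\}}$, whose expectation equals $\epsilon^\theta T\int_{|x|\geq 1}|x|\,\nu(dx)$. This is finite because $1<\alpha<2$ gives $\int_{|x|\geq 1}|x|\,\nu(dx)<\infty$, hence $\mathbb{E}\sup_{t\leq T}|u_2^\epsilon(t)|\lesssim \epsilon^\theta\leq 1$.

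The main obstacle is the small-jump piece $u_1^\epsilon$: the kernel depends on the upper limit $t$, so $u_1^\epsilon$ is not a martingale in $t$ and Doob's inequality cannot be applied directly. The key step is integration by parts in $r$: since $r\mapsto 1-e^{-(t-r)/\epsilon}$ is continuous of bounded variation with derivative $\epsilon^{-1}e^{-(t-r)/\epsilon}$ (and its quadratic covariation with the purely-discontinuous $L_1$ vanishes), one obtains
\begin{equation*}
u_1^\epsilon(t)=\epsilon^{\theta-1}\int_0^t L_1(r)\,e^{-(t-r)/\epsilon}\,dr,
\end{equation*}
which now involves only a pathwise time integral of the martingale $L_1$. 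Estimating crudely, $\sup_{t\leq T}|u_1^\epsilon(t)|\leq \epsilon^\theta\sup_{r\leq T}|L_1(r)|$. Because $\int_{|x|<1}|x|^2\,\nu(dx)<\infty$, the small-jump martingale $L_1$ is square-integrable and Doob's $L^2$-inequality bounds $\mathbb{E}\sup_{r\leq T}|L_1(r)|\leq (4T\int_{|x|<1}|x|^2\,\nu(dx))^{1/2}$ by a constant independent of $\epsilon$. Combining the two estimates gives $\mathbb{E}\sup_{t\leq T}|u^\epsilon(t)|\lesssim \epsilon^\theta\leq 1$ uniformly for $0<\epsilon\leq 1$, which proves the claim.
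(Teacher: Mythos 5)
Your proof is correct, and it reaches the same explicit representation as the paper but finishes it by a genuinely different and more elementary route. The paper also reduces the lemma to the identity $\ue(t)=\ep^\theta\big(L(t)-\bwe(t)\big)$ with $\bwe(t)=\int_0^t e^{-\ep^{-1}(t-s)}dL(s)$ (equivalently your $\ue(t)=\ep^\theta\int_0^t(1-e^{-(t-r)/\ep})dL(r)$), but it then estimates $\mathbb{E}\supt|\bwe(t)|$ uniformly in $\ep$ by applying It\^o's formula to $\phi(x)=(|x|^2+1)^{p/2}$, invoking Burkholder--Davis--Gundy and Taylor estimates for the three jump terms, and closing with Gronwall, plus a separate BDG estimate for $\mathbb{E}\supt|L(t)|$. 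You instead split the noise into the compensated small-jump martingale $L_1$ and the large-jump compound Poisson part $L_2$, bound the $L_2$-contribution pathwise by the sum of large jumps (finite first moment since $\al>1$), and convert the $L_1$-contribution by integration by parts into $\ep^{\theta-1}\int_0^t L_1(r)e^{-(t-r)/\ep}dr$, which is dominated pathwise by $\ep^\theta\sup_{r\le T}|L_1(r)|$ and controlled by Doob's $L^2$ inequality. This avoids the It\^o-formula/Gronwall machinery entirely and even yields the sharper bound $\mathbb{E}\supt|\ue(t)|\lesssim\ep^\theta$. Two small points to tighten: the interchange $\int_0^t\int_0^s e^{-(s-r)/\ep}\,dL(r)\,ds=\int_0^t\big(\int_r^t e^{-(s-r)/\ep}ds\big)dL(r)$ should be justified by the stochastic Fubini theorem for the compensated small-jump integral (the large-jump part is a pathwise finite sum), or avoided altogether by integrating the equation for $\ve$ directly as the paper does; and the $r$-derivative of $1-e^{-(t-r)/\ep}$ is $-\ep^{-1}e^{-(t-r)/\ep}$ (your displayed integration-by-parts identity is nonetheless the correct one, since the boundary term vanishes at $r=t$ and at $r=0$).
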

	\begin{proof}
		Set $\we$ to be the solution of the following linear SDE
		\begin{equation}\label{defofw}
			\dot{\we}=-\ep^{-1}\we+\ep^{-1/\al}\dot{L},\quad \we(0)=0,
		\end{equation} 
		 and it is straightforward to check that $\ve=\ep^{\theta+\frac{1}{\al}-1}\we,$ so
		$$\ue(t)=\intot\ve(s)ds=\ep^{\theta+\frac{1}{\al}-1}\intot\we(s)ds.$$
		By the definition of $\we$, we have
		$$\we(t)=-\ep^{-1}\intot\we(s)ds+\ep^{-1/\al}L(t).$$
		Multiplying $\ep^{\theta+\frac{1}{\al}}$ and rearranging,
		$$\ep^{\theta+\frac{1}{\al}-1}\intot\we(s)ds=\ep^{\theta}(L(t)-\ep^{\frac{1}{\al}}\we(t)),$$
		so that 
		$$\mathbb{E}\supt|\ue(t)|=\ep^{\theta}\mathbb{E}\supt|L(t)-\ep^{\frac{1}{\al}}\we(t)|.$$
		Set $\bwe:=\ep^{\frac{1}{\al}}\we,$ and one verifies immediately that
		$$d\bwe(t)=-\ep^{-1}\bwe(t)+dL(t),\quad \bwe(0)=0.$$
		The proposition is proved provided that we show
		\begin{equation}\label{esforbwe}
			\supe\mathbb{E}\supt|\bwe(t)|<\infty,
		\end{equation}
		 and
		 \begin{equation}\label{esforL}
		 	\mathbb{E}\supt|L(t)|<\infty.
		 \end{equation}
		Let us show (\ref{esforbwe}) first. Indeed, applying It\^{o}'s formula for $\phi(x)=(|x|^2+1)^{p/2},$ and note that
		\begin{equation}\label{phiproperty}
			|D\phi(y)|\lesssim|y|^{p-1},\quad ||D^2\phi(y)||\lesssim1,
		\end{equation}
		\begin{eqnarray}\label{esforbv4}
			&&\phi(\bwe(t))\nonumber\\&=&1+\intot (D\phi(\bwe(s)),-\ep^{-1}\bwe(s))ds\nonumber\\&&{}+\intot\int_{|x|<1}\phi(\bwe(s-)+x)-\phi(\bwe(s-))\tilde{N}(dsdx)\nonumber\\&&{}+\intot\int_{|x|\geq1}\phi(\bwe(s-)+x)-\phi(\bwe(s-))N(dsdx)\nonumber\\&&{}+\intot\int_{|x|<1}\phi(\bwe(s)+x)-\phi(\bwe(s))-(D\phi(\bwe(s)),x)\nu(dx)ds\nonumber\\&=:&1+\sum_{k=1}^{4}H^\ep_k(t).
		\end{eqnarray}
		Since $$D\phi(y)=\frac{py}{(|y|^2+1)^{1-p/2}},$$ one immediately obtains $$H^\ep_1(t)=\intot\frac{-p|\bwe(s)|^2}{\ep(|\bwe(s)|^2+1)^{1-p/2}}ds\leq 0,$$
		and 
		\begin{equation}
			\mathbb{E}\supt H^\ep_1(t)\leq 0.\label{esforH1}
		\end{equation}
		By Burkholder--Davis--Gundy inequality\cite[Theorem 3.50]{peszat2007stochastic}, Taylor formula and (\ref{phiproperty}),
		\begin{eqnarray}\label{esforH2}
			&&\mathbb{E}\supt H^\ep_2(t)\nonumber\\
			&\lesssim&\mathbb{E}\sqrt{\intobtxl|\phi(\bwe(s-)+x)-\phi(\bwe(s-))|^2N(dsdx)}\nonumber \\
			&\leq&\sqrt{\mathbb{E}\intobtxl|\phi(\bwe(s-)+x)-\phi(\bwe(s-))|^2N(dsdx)}\nonumber \\&=&\sqrt{\mathbb{E}\intobtxl|\phi(\bwe(s)+x)-\phi(\bwe(s))|^2\nu(dx)ds}\nonumber \\&=&\sqrt{\mathbb{E}\intobtxl|(D\phi(\bwe(s)+\theta x),x)|^2\nu(dx)ds}\nonumber\\&\lesssim&\sqrt{\mathbb{E}\intobtxl|\bwe(s)+\theta x|^{2p-2}|x|^2\nu(dx)ds}\nonumber \\&\leq&\sqrt{\mathbb{E}\intobtxl|\bwe(s)|^{2p-2}|x|^2\nu(dx)ds+\intobtxl|x|^{2p}\nu(dx)ds}\nonumber\\
			&\leq&\mathbb{E}\intobtxl|\bwe(s)|^{2p-2}|x|^2\nu(dx)ds+\intobtxl|x|^{2p}\nu(dx)ds+1 \nonumber \\
			&\lesssim&\int_{0}^{T}\mathbb{E}|\bwe(s)|^{2p-2}ds+1\nonumber
			\\&\leq &\int_{0}^{T}\mathbb{E}\sup\limits_{0\leq s\leq t}\phi(\bwe(s))dt+1	 
		\end{eqnarray}
		where we have used  $\sqrt{z}\leq z+1$ and $2p-2\leq p$.

		For $H^\ep_3$\,,   we have 
		\begin{eqnarray*}
			&&\mathbb{E}\supt H^\ep_3(t)\nonumber\\&\leq&\mathbb{E}\intobtxg|\phi(\bwe(s-)+x)-\phi(\bwe(s-))|N(dsdx)\nonumber\\&=&\mathbb{E}\intobtxg|\phi(\bwe(s)+x)-\phi(\bwe(s))|\nu(dx)ds.
		\end{eqnarray*}
		By Taylor formula, Young inequality and (\ref{phiproperty}),
		\begin{eqnarray*}
			&&\intobtxg|\phi(\bwe(s)+x)-\phi(\bwe(s))|\nu(dx)ds\\&=&\intobtxg|(D\phi(\we(s)+\theta x),x)|\nu(dx)ds\\&\leq&\intobtxg|D\phi(\we(s)+\theta x)|\cdot|x|\nu(dx)ds\\&\lesssim&\intobtxg|\we(s)+\theta x|^{p-1}|x|\nu(dx)ds\\&\leq&\intobtxg(|\we(s)|^{p-1}+|x|^{p-1})|x|\nu(dx)ds\\&\lesssim&\int_0^T\phi(\we(s))ds+1,
		\end{eqnarray*}
		so that
		\begin{eqnarray}\label{esforH3}
			\mathbb{E}\supt H^\ep_3(t)\lesssim\int_{0}^{T}\mathbb{E}\phi(\bwe(s))ds+1\leq \int_{0}^{T}\mathbb{E}\sup\limits_{0\leq s\leq t}\phi(\bwe(s))dt+1.
		\end{eqnarray}
		Lastly we turn to $H^\ep_4$\,.  By Taylor formula and (\ref{phiproperty}),
		\begin{eqnarray*}
			&&\supt H^\ep_4(t)\nonumber\\&\leq &\intobtxl|\phi(\bwe(s)+x)-\phi(\bwe(s))-(D\phi(\bwe(s)),x)|\nu(dx)ds\nonumber\\&\leq &\intobtxl|D^2\phi(\bwe(s)+\theta x)(x\otimes x)|\nu(dx)ds\nonumber\\&\lesssim&\intobtxl|x|^2\nu(dx)ds\nonumber\\&\lesssim&1,
		\end{eqnarray*}
		which means
		\begin{equation}
			\mathbb{E}\supt H^\ep_4(t)\lesssim 1.\label{esforH4}
		\end{equation}
		Taking supremum and expectation in (\ref{esforbv4}) and using (\ref{esforH1})--(\ref{esforH4}),
		$$\mathbb{E}\supt\phi(\bwe(t))\lesssim \int_{0}^{T}\mathbb{E}\sup\limits_{0\leq s\leq t}\phi(\bwe(s))dt+1,$$
		and we end our proof of (\ref{esforbwe}) by Gronwall's inequality. The proof of (\ref{esforL}) is standard. In fact, by Burkholder-Davis-Gundy inequality,
		\begin{eqnarray}
			&&\mathbb{E}\supt|L(t)|\nonumber\\&\leq&\mathbb{E}\supt\Big|\intot\int_{|x|<1} x\tilde{N}(dsdx)\Big|+\mathbb{E}\supt\Big|\intot\int_{|x|\geq 1} x{N}(dsdx)\Big|\nonumber\\&\leq&\mathbb{E}\sqrt{\int_{0}^{T}\int_{|x|<1}x^2 {N}(dsdx)}+\mathbb{E}\int_{0}^{T}\int_{|x|\geq1}|x|N(dsdx)\nonumber\\&\leq&\sqrt{\mathbb{E}\int_{0}^{T}\int_{|x|<1}x^2 N(dsdx)}+\mathbb{E}\int_{0}^{T}\int_{|x|\geq1}|x|N(dsdx)\nonumber\\&=&\sqrt{T\int_{|x|<1}x^2\nu(dx)}+T\int_{|x|\geq1}|x|\nu(dx)\nonumber\\&\lesssim&1.
		\end{eqnarray}
		This finishes the proof.
	\end{proof}
	We give the moment estimate for $\ube$ with the help of Lemma \ref{thmforlinearEsup}.
	\begin{prop}\label{thmforEsup}
		$$\supe\mathbb{E}\supt|\ube(t)|<\infty.$$
	\end{prop}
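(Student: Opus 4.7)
The plan is to reduce the problem to Lemma \ref{thmforlinearEsup} by subtracting off the linear piece $\ue$, so that the remainder is a deterministic affine integral equation in $\ube$ to which a pathwise Gronwall argument can be applied.

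First I would set $\xe:=\ube-\ue$, where $\ue$ is the solution of the linear equation (\ref{linearwave}) with zero initial data. Subtracting (\ref{linearwave}) from (\ref{wave}) cancels the noise and yields the deterministic (given $\ube$) second-order ODE
$$\ep\ddot{\xe}(t)+\dot{\xe}(t)=f(\ube(t)),\quad \xe(0)=u_0,\quad \dot{\xe}(0)=v_0.$$
Multiplying by the integrating factor $e^{t/\ep}$ and integrating twice (so that $\dot{\xe}(t)=e^{-t/\ep}v_0+\ep^{-1}\int_0^t e^{-(t-s)/\ep}f(\ube(s))\,ds$ and then integrating in $t$) gives the explicit representation
$$\xe(t)=u_0+\ep v_0\bigl(1-e^{-t/\ep}\bigr)+\intot\bigl(1-e^{-(t-s)/\ep}\bigr)f(\ube(s))\,ds.$$

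Next I would use assumption $(\mathbf{A})$ to bound $|f(\ube(s))|\leq |f(0)|+L_f|\ube(s)|$. Since $0\leq 1-e^{-(t-s)/\ep}\leq 1$ and $|u_0|+\ep|v_0|\lesssim 1$, the decomposition $\ube=\ue+\xe$ gives, for every $t\in[0,T]$,
$$\sup_{0\leq r\leq t}|\ube(r)|\leq \supt|\ue(t)|+C+L_f\intot\sup_{0\leq r\leq s}|\ube(r)|\,ds,$$
where $C$ depends on $|u_0|,|v_0|,|f(0)|,T$ but not on $\ep$. By Lemma \ref{lemmaexistence} the process $\ube$ is c\`adl\`ag, so $s\mapsto\sup_{0\leq r\leq s}|\ube(r)|$ is finite $\mathbb{P}$-a.s. on $[0,T]$, and Gronwall's inequality applies pathwise to give
$$\supt|\ube(t)|\leq e^{L_fT}\Bigl(\supt|\ue(t)|+C\Bigr)\quad\mathbb{P}\text{-a.s.}$$

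Finally I would take expectation and apply Lemma \ref{thmforlinearEsup} to conclude $\supe\mathbb{E}\supt|\ube(t)|<\infty$. The main subtlety is simply guaranteeing that the pathwise Gronwall step is legitimate; this is ensured by the existence of a c\`adl\`ag strong solution from Lemma \ref{lemmaexistence} (if one wanted to be pedantic, one could first localize by the stopping time $\tau_R^\ep:=\inf\{t:|\ube(t)|\geq R\}$, run the estimate on $[0,T\wedge\tau_R^\ep]$, and send $R\to\infty$ via monotone convergence). No moment beyond $L^1$ is needed, which matches the fact that $\al$-stable noise lacks second moments.
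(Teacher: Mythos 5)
Your proposal is correct and follows essentially the same route as the paper: subtract the linear solution $\ue$ of (\ref{linearwave}) so the noise cancels, use the variation-of-constants formula and the Lipschitz bound on $f$, and close the estimate with Gronwall together with Lemma \ref{thmforlinearEsup}. The only (harmless) difference is that you apply Gronwall pathwise to $\sup_{0\leq r\leq t}|\ube(r)|$ and then take expectation, while the paper takes supremum and expectation first and applies Gronwall to $\mathbb{E}\sup_{0\leq s\leq t}|\ube(s)-\ue(s)|$.
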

	\begin{proof}
		Set $\re:=\ube-\ue.$ It follows from (\ref{wave}) and (\ref{linearwave}) that
		\begin{equation}\label{rde}
			\ep\ddot{\re}+\dot{\re}=f(\ue+\re),\quad \re(0)=u_0,\dot{\re}(0)=v_0,
		\end{equation}
		and we can rewrite it as
		\begin{equation}\label{rdeslowfast}
			\begin{cases}
				\dot{\re}=\xe,\re(0)=u_0\\
				\dot{\xe}=\ep^{-1}(-\xe+f(\ue+\re)), \xe(0)=v_0.
			\end{cases}
		\end{equation}
		From (\ref{rdeslowfast}), we can solve $\xe$ analytically as
		$$\xi(t)=\ep^{-1}e^{-\ep^{-1}t}\intot e^{\ep^{-1}s}(f(\ue(s)+\re(s)))ds,$$
		so that by the Lipschitz property of $f$,
		\begin{eqnarray*}
			&&|\xe(t)|\\&\leq&\ep^{-1}e^{-\ep^{-1}t}\intot e^{\ep^{-1}s}|f(\ue(s)+\re(s))|ds\\&\lesssim&\ep^{-1}e^{-\ep^{-1}t}\intot e^{\ep^{-1}s}|\ue(s)|ds+\ep^{-1}e^{-\ep^{-1}t}\intot e^{\ep^{-1}s}|\re(s)|ds+\ep^{-1}e^{-\ep^{-1}t}\intot e^{\ep^{-1}s}ds\\&\leq&\ep^{-1}e^{-\ep^{-1}t}\intot e^{\ep^{-1}s}\supt|\ue(t)|ds+\ep^{-1}e^{-\ep^{-1}t}\intot e^{\ep^{-1}s}|\re(s)|ds+1\\&\leq&\supt|\ue(t)|+\ep^{-1}e^{-\ep^{-1}t}\intot e^{\ep^{-1}s}|\re(s)|ds+1.
		\end{eqnarray*}
		Since $\dot\re=\xe,$ we can continue our estimate as
		\begin{eqnarray*}
			&&|\re(t)|\\&\leq&\intot|\xe(s)|ds\\&\lesssim&t\supt|\ue(t)|+\intot\ep^{-1}e^{-\ep^{-1}s}\intos e^{\ep^{-1}r}|\re(r)|drds+1\\&=&t\supt|\ue(t)|+\ep^{-1}\intot\intos e^{-\ep^{-1}(s-r)}|\re(r)|drds+1\\&=&t\supt|\ue(t)|+\ep^{-1}\intot\int_{r}^{t} e^{-\ep^{-1}(s-r)}|\re(r)|dsdr+1\\&=&t\supt|\ue(t)|+\ep^{-1}\intot e^{\ep^{-1}r}|\re(r)|\int_{r}^{t}e^{-\ep^{-1}s}dsdr+1\\&=&t\supt|\ue(t)|+\ep^{-1}\intot e^{\ep^{-1}r}|\re(r)|(\ep e^{-\ep^{-1}r}-\ep e^{-\ep^{-1}t})dr+1\\&\leq&t\supt|\ue(t)|+\intot|\re(r)|dr+1.
		\end{eqnarray*}
		Taking supremum with respect to $t$,
		\begin{eqnarray*}
			&&\supt|\re(t)|\\&\lesssim&T\supt|\ue(t)|+\int_0^T|\re(r)|dr+1\\&\lesssim&T\supt|\ue(t)|+\int_0^T\sup_{0\leq s\leq t}|\re(s)|dt+1.
		\end{eqnarray*}
		Taking expectation and applying Lemma \ref{thmforlinearEsup},
		\begin{eqnarray*}
			\mathbb{E}\supt|\re(t)|\lesssim\int_0^T\mathbb{E}\sup_{0\leq s\leq t}|\re(s)|dt+1.
		\end{eqnarray*}
		By Gronwall inequality,
		\begin{equation}\label{esforre}
			\mathbb{E}\supt|\re(t)|\lesssim1.
		\end{equation}
		Since $\ube=\ue+\re,$ our proof is finished by combining (\ref{esforre}) and Lemma \ref{thmforlinearEsup}.
	\end{proof}
	Utilizing the uniform moment estimate, we establish the tightness of $(\ube)_{0<\ep\leq1}.$ 
	\begin{prop}\label{thmfortight}
		$(\mathcal{L}(\ube))_{0<\ep\leq 1}$ is tight on $\mathbb{D}([0,T];\mathbb{R}^d).$
	\end{prop}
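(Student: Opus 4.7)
The plan is to apply Aldous' tightness criterion for $\mathbb{D}([0,T];\mathbb{R}^d)$, using (i) pointwise tightness from Proposition \ref{thmforEsup} and (ii) a uniform-in-$\ep$ increment estimate at stopping times.

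The first step is to derive an integral representation of $\ube$ by solving the splitting (\ref{bveq}) componentwise: $\bvbe_1(s) = \ep v_0 e^{-s/\ep}$, $\bvbe_2(s) = \ep^{-1}\int_0^s e^{-(s-r)/\ep}f(\ube(r))\,dr$, and $\bwe := \ep^{1/\al}\bvbe_3$ coincides with the process from Lemma \ref{thmforlinearEsup} solving $d\bwe = -\ep^{-1}\bwe\,dt + dL$. Substituting (\ref{decomposition}) into $\ube(t) = u_0 + \int_0^t \vbe(s)\,ds$ and applying Fubini yields
\begin{equation*}
\ube(t) = u_0 + \ep v_0(1 - e^{-t/\ep}) + \int_0^t f(\ube(r))(1 - e^{-(t-r)/\ep})\,dr + \ep^\theta(L(t) - \bwe(t)),
\end{equation*}
where I used the identity $\ep^{-1}\int_0^t \bwe(s)\,ds = L(t) - \bwe(t)$, obtained by integrating the SDE for $\bwe$. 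Pointwise tightness of $\{\ube(t)\}_\ep$ is then immediate from Proposition \ref{thmforEsup} and Markov.

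The second step is to bound $|\ube(\tau+h) - \ube(\tau)|$ for a stopping time $\tau \leq T$ and $h \in (0,\delta]$. The boundary-layer term contributes at most $|v_0|h$, and the drift term contributes at most $Ch\sup_r|f(\ube(r))|$, which is $L^1$-controlled by Proposition \ref{thmforEsup} and assumption $\mathbf{(A)}$. For the stochastic piece, writing $\bwe(\tau+h) = e^{-h/\ep}\bwe(\tau) + \int_\tau^{\tau+h}e^{-(\tau+h-s)/\ep}\,dL(s)$ gives
\begin{equation*}
\ep^\theta[(L-\bwe)(\tau+h) - (L-\bwe)(\tau)] = \ep^\theta\!\int_\tau^{\tau+h}(1-e^{-(\tau+h-s)/\ep})\,dL(s) + \ep^\theta(1-e^{-h/\ep})\bwe(\tau).
\end{equation*}
The first Lévy integral is controlled by the $\al$-stable moment inequality $\mathbb{E}|\int\phi\,dL|^\beta \lesssim (\int|\phi|^\al\,ds)^{\beta/\al}$ valid for $\beta\in(1,\al)$, yielding $\mathbb{E}|\cdot|^\beta \lesssim h^{\beta/\al}$ uniformly in $\ep$.

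The main obstacle is the residual $\ep^\theta(1-e^{-h/\ep})\bwe(\tau)$: the factor $1-e^{-h/\ep}$ saturates at $1$ once $h \gg \ep$, so the prefactor $\ep^\theta$ must absorb the $O(1)$ size of $\bwe(\tau)$. Via a case split on whether $h \leq \ep$ or $h > \ep$, one bounds this residual by $\ep^\theta\min(h/\ep,1)\sup_t|\bwe(t)|$, and $\mathbb{E}\sup_t|\bwe(t)| \leq C$ from Lemma \ref{thmforlinearEsup}. Assembling all estimates, Aldous' condition
\begin{equation*}
\lim_{\delta \to 0}\limsup_\ep \sup_{\tau,\, h\leq \delta}\mathbb{P}(|\ube(\tau+h) - \ube(\tau)| > \eta) = 0
\end{equation*}
is verified for every $\eta > 0$, yielding the tightness of $(\mathcal{L}(\ube))_{0<\ep\leq 1}$ in $\mathbb{D}([0,T];\mathbb{R}^d)$.
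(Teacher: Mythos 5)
Your strategy is the same as the paper's (Aldous' criterion combined with the explicit Ornstein--Uhlenbeck representation of the velocity), and your algebra is correct up to and including the decomposition of the stochastic increment: indeed $\ep^\theta\bigl[(L-\bwe)(\tau+h)-(L-\bwe)(\tau)\bigr]=\ep^\theta\int_\tau^{\tau+h}\bigl(1-e^{-(\tau+h-s)/\ep}\bigr)dL(s)+\ep^\theta\bigl(1-e^{-h/\ep}\bigr)\bwe(\tau)$, and the first term is handled correctly (after a strong Markov time-shift so that the stable moment inequality is applied to a deterministic integrand). The gap is in the very last step, exactly at the term you yourself flagged as the main obstacle. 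Your bound for the residual is $C\,\ep^\theta\min(h/\ep,1)$, and $\sup_{0<\ep\le1}\ep^\theta\min(h/\ep,1)=h^\theta$ when $0<\theta<1$, but it equals $1$ when $\theta=0$ (take $\ep\le h$). So for $\theta=0$ --- precisely the case needed for part (ii) of Theorem \ref{mainresult} --- the quantity you assemble does \emph{not} tend to $0$ as $\delta\to0$ uniformly in $\ep$, and the displayed Aldous condition is not verified by your estimates. Your proof is fine for each fixed $\theta\in(0,1)$, with modulus of order $\delta^\theta+\delta^{1/\al}$, but the conclusion "Aldous' condition is verified" is a non sequitur at $\theta=0$.

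Moreover, this is not a matter of your bound being too lossy: for $\theta=0$ let $\tau$ be the first time $L$ has a jump of magnitude $\ge1$ (capped at $T-\delta$), an event of probability bounded below uniformly in $\ep$. On that event $\bwe(\tau)=\bwe(\tau-)+\Delta L(\tau)$ is of order one (the pre-jump value is $O(\ep^{1/\al})$), while $1-e^{-h/\ep}\to1$ as $\ep\to0$ for fixed $h$; hence $\mathbb{E}\bigl[(1-e^{-h/\ep})|\bwe(\tau)|\bigr]$ stays bounded away from zero. Intuitively, the continuous path $\ube$ spreads the limiting jump over a window of length $O(\ep)$ immediately after $\tau$, and a stopping time may sit exactly at the start of that window, so the increment over $[\tau,\tau+h]$ is genuinely of order one. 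Note that the paper's own proof passes this point only by replacing the Poisson integral $\int_0^\tau\int_{|x|\ge1}e^{-\ep^{-1}(\tau-r)}|x|N_k(dr\,dx)$ (in the estimate of the term $B$) by its intensity integral, although the integrand involves $\tau$ and is therefore not predictable; the jump sitting at time $\tau$ itself contributes order one there. So the obstruction you isolated is real: at $\theta=0$ the increment condition at stopping times fails, consistent with the facts that each $\ube$ has continuous paths, that $\mathbb{C}$ is closed in $(\mathbb{D},d^o)$, and that the finite-dimensional limits (take $f=0$, $u_0=v_0=0$, so $\ube=L-\bwe$) are those of $L$, which has jumps; $J_1$-tightness cannot hold in that case, and one would have to pass to a weaker topology (such as $M_1$) to make the statement true.
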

	\begin{remark}\label{nottightinC}
		One finds that, since $\dot{\ube}=\vbe$, $\ube$ is the Lebesgue integral of $\vbe$, and in particular the trajectory of $\ube$ belongs to $\mathbb{C}:=\mathbb{C}([0,T];\mathbb{R}^d).$ One might try to show the tightness of $(\ube)$ in $\mathbb{C},$ but at least when $\theta=0$, that is, in the case of classical SK approximation, it is impossible. To see this, suppose by contradictory that $(\ube)$ is tight in $\mathbb{C}.$ By the same argument proving part (ii) of Theorem \ref{mainresult} appearing in Section \ref{proofofmain}, one shows that $(\ube)$ converges in probability in the space $\mathbb{C}$ to its limit $U$, and that $$dU(t)=f(U(t))dt+dL(t).$$ Passing to a subsequence, $U\in\mathbb{C}$ a.s.. However, if we take $f=0$ and $u_0=0$, then $U=L$, and that is to say $L\in\mathbb{C}$ a.s., which is not true.
	\end{remark}
	\begin{proof}
		We show the proposition by checking the Aldous tightness criterion \cite[Theorem 16.10]{BIL13}. By Chebyshev inequality, it is sufficient to verify that
		
		(i) $\supe\mathbb{E}\supt|\ube(t)|<\infty,$
		
		(ii) There exists a map $f: \mathbb{R}_+\to \mathbb{R}_+$ with $\lim\limits_{\delta\downarrow0}|f(\delta)|\downarrow 0$ such that for each $\ep>0,$ $\delta>0$ and stopping time $\tau$ with $\tau+\delta\leq T,$
		$$\mathbb{E}|\ube(\tau+\delta)-\ube(\tau)|\leq f(\delta).$$
		
		Note that (i) is precisely the Proposition \ref{thmforEsup}, so we only need to check (ii).
		
		From (\ref{slowfast}) we see that $\vbe$ is an Ornstein-Uhlenbeck process driven by a semimartingale $$dZ^\ep(t)=\ep^{-1}f(\ube(t))dt+\ep^{\theta-1}dL(t),$$ so that
		$$\vbe(t)=e^{-\ep^{-1}t}v_0+\ep^{-1}\intot e^{-\ep^{-1}(t-s)}f(\ube(s))ds+\ep^{\theta-1}\intot e^{-\ep^{-1}(t-s)}dL(s).$$
		Since $\dot{\ube}=\vbe,$ we have
		\begin{eqnarray}
			&&\ube(t)\nonumber\\&=&\intot e^{-\ep^{-1}s}ds\cdot v_0+\ep^{-1}\intot\intos e^{-\ep^{-1}(s-r)}f(\ube(r))drds+\ep^{\theta-1}\intot\intos e^{-\ep^{-1}(s-r)}dL(r)ds\nonumber\\&=:&I^\ep_1(t)+I^\ep_2(t)+I^\ep_3(t).
		\end{eqnarray}
		Let us deal with $I^\ep_1$ first. By the elementary inequality $1-e^{-x}\leq x,$
		\begin{eqnarray}
			&&|I^\ep_1(\tau+\delta)-I^\ep_1(\tau)|\nonumber\\&=&|v_0|\Big|\int_{\tau}^{\tau+\delta}e^{-\ep^{-1}s}ds\Big|\nonumber\\&=&|v_0|\ep e^{-\ep^{-1}\tau}(1-e^{-\ep^{-1}\delta})\nonumber\\&\leq&|v_0|\ep e^{-\ep^{-1}\tau}\ep^{-1}\delta\nonumber\\&=&|v_0|e^{-\ep^{-1}\tau}\delta\nonumber\\&\leq&|v_0|\delta,
		\end{eqnarray}
		so \begin{equation}\label{tightI1}
			\mathbb{E}|I^\ep_1(\tau+\delta)-I^\ep_1(\tau)|\lesssim\delta.
		\end{equation}
		By Fubini theorem,
		\begin{eqnarray}\label{tightI2ready}
			&&I^\ep_2(t)\nonumber\\&=&\ep^{-1}\intot\int_r^t e^{-\ep^{-1}(s-r)}f(\ube(r))dsdr\nonumber\\&=&\ep^{-1}\intot e^{\ep^{-1}r}f(\ube(r))\int_r^te^{-\ep^{-1}s}dsdr\nonumber\\&=&\ep^{-1}\intot e^{\ep^{-1}r}f(\ube(r))\ep(e^{-\ep^{-1}r}-e^{-\ep^{-1}t})dr\nonumber\\&=&\intot f(\ube(r))dr-e^{-\ep^{-1}t}\intot e^{\ep^{-1}r}f(\ube(r))dr\nonumber\\&=:&I^\ep_{21}(t)-I^\ep_{22}(t).
		\end{eqnarray}
		Since $f$ is Lipschitz,
		\begin{eqnarray}\label{tightI21}
			&&|I^\ep_{21}(\tau+\delta)-I^\ep_{21}(\tau)|\nonumber\\&=&\Big|\int_{\tau}^{\tau+\delta}f(\ube(r))dr\Big|\nonumber\\&\lesssim&\int_{\tau}^{\tau+\delta}|\ube(r)|+1dr\nonumber\\&\leq&\supt(|\ube(t)|+1)\delta.
		\end{eqnarray}
		\begin{eqnarray}\label{tightI22}
			&&|I^\ep_{22}(\tau+\delta)-I^\ep_{22}(\tau)|\nonumber\\&=&\Big|e^{-\ep^{-1}(\tau+\delta)}\int_{0}^{\tau+\delta}e^{\ep^{-1}r}f(\ube(r))dr-e^{-\ep^{-1}\tau}\int_{0}^{\tau}e^{\ep^{-1}r}f(\ube(r))dr\Big|\nonumber\\&\leq&\Big|e^{-\ep^{-1}(\tau+\delta)}\int_{0}^{\tau+\delta}e^{\ep^{-1}r}f(\ube(r))dr-e^{-\ep^{-1}(\tau+\delta)}\int_{0}^{\tau}e^{\ep^{-1}r}f(\ube(r))dr\Big|\nonumber\\&&+\Big|e^{-\ep^{-1}(\tau+\delta)}\int_{0}^{\tau}e^{\ep^{-1}r}f(\ube(r))dr-e^{-\ep^{-1}\tau}\int_{0}^{\tau}e^{\ep^{-1}r}f(\ube(r))dr\Big|\nonumber\\&=&\Big|e^{-\ep^{-1}(\tau+\delta)}\int_{\tau}^{\tau+\delta}e^{\ep^{-1}r}f(\ube(r))dr\Big|+e^{-\ep^{-1}\tau}(1-e^{\ep^{-1}\delta})\Big|\int_{0}^{\tau}e^{\ep^{-1}r}f(\ube(r))dr\Big|\nonumber\\&\leq&\int_{\tau}^{\tau+\delta}|f(\ube(r))|dr+e^{-\ep^{-1}\tau}\frac{\delta}{\ep}\int_{0}^{\tau}e^{\ep^{-1}r}|f(\ube(r))|dr\nonumber\\&\lesssim&\int_{\tau}^{\tau+\delta}\supt|\ube(t)|+1dr+e^{-\ep^{-1}\tau}\frac{\delta}{\ep}\int_{0}^{\tau}e^{\ep^{-1}r}(\supt|\ube(t)|+1)dr\nonumber\\&=&(\supt|\ube(t)|+1)\delta+(\supt|\ube(t)|+1)e^{-\ep^{-1}\tau}\frac{\delta}{\ep}\int_{0}^{\tau}e^{\ep^{-1}r}dr\nonumber\\&\leq&(\supt|\ube(t)|+1)\delta+(\supt|\ube(t)|+1)e^{-\ep^{-1}\tau}\frac{\delta}{\ep}\ep e^{\ep^{-1}\tau}\nonumber\\&=&2(\supt|\ube(t)|+1)\delta.
		\end{eqnarray}
		By (\ref{tightI2ready})-(\ref{tightI22}) and Proposition \ref{thmforEsup},
		\begin{equation}\label{tightI2}
			\mathbb{E}|I^\ep_{2}(\tau+\delta)-I^\ep_{2}(\tau)|\lesssim(\mathbb{E}\supt|\ube(t)|+1)\delta\lesssim\delta.
		\end{equation}
		\begin{eqnarray}\label{tightI3ready}
			&&I^\ep_3(t)\nonumber\\&=&\intot\intos\ep^{\theta-1} e^{-\ep^{-1}(s-r)}dL(r)ds\nonumber\\&=&\sum\limits_{k=1}^{d}\intot\intos\ep^{\theta-1} e^{-\ep^{-1}(s-r)}dL_k(r)ds\cdot e_k\nonumber\\&=:&\sum\limits_{k=1}^{d}J^\ep_k(t)e_k,
		\end{eqnarray}
		where $(e_k)$ is the orthonomal basis of $\mathbb{R}^d.$ By the L\'{e}vy-It\^{o} decomposition,
		\begin{eqnarray}\label{tightJkready}
			&&J^\ep_k(t)\nonumber\\&=&\ep^{\theta-1}\intot\intos\int_{|x|\geq 1}e^{-\frac{1}{\ep}(s-r)}xN_k(drdx)ds\nonumber\\&&+\ep^{\theta-1}\intot\intos\int_{|x|< 1}e^{-\frac{1}{\ep}(s-r)}x\tilde{N}_k(drdx)ds\nonumber\\&=:&J^1_k(t)+J^2_k(t).
		\end{eqnarray}
		First we deal with $J^1_k.$ Recall the integration-by-part formula for real semimartingale(\cite[Page 68]{PRO05}), that is, $$\intot X(s-)dY(s)=X(t)Y(t)-X(0)Y(0)-\intot Y(s-)dX(s)-[X,Y](t),$$ and therefore, 
		\begin{eqnarray}\label{tightJ1kready}
			&&J^1_k(t)\nonumber\\&=&\ep^\theta\intot\ep^{-1}e^{-\ep^{-1}s}\intos\int_{|x|\geq 1}e^{\ep^{-1}r}xN_k(drdx)ds\nonumber\\&=&\ep^\theta\intot\intos\int_{|x|\geq 1}e^{\ep^{-1}r}xN_k(drdx)d(-e^{-\ep^{-1}s})\nonumber\\&=&\ep^\theta\Big[-\intot\int_{|x|\geq 1}e^{\ep^{-1}r}xN_k(drdx)e^{-\ep^{-1}t}+\intot e^{-\ep^{-1}s}d\intos\int_{|x|\geq 1}e^{\ep^{-1}r}xN_k(drdx)\Big]\nonumber\\&=&\ep^\theta\Big[-\intot\int_{|x|\geq 1}e^{\ep^{-1}r}xN_k(drdx)e^{-\ep^{-1}t}+\intot\int_{|x|\geq 1}xN_k(dsdx)\Big]\nonumber\\&=:&\ep^\theta(J^{11}_k(t)+J^{12}_k(t)).
		\end{eqnarray}
		For $J^{11}_k,$
		\begin{eqnarray}
			&&\mathbb{E}|J^{11}_k(\tau+\delta)-J^{11}_k(\tau)|\nonumber\\&=&\mathbb{E}\Big|-e^{-\ep^{-1}(\tau+\delta)}\int_{0}^{\tau+\delta}\int_{|x|\geq 1}e^{\ep^{-1}r}xN_k(drdx)+e^{-\ep^{-1}\tau}\int_{0}^{\tau}\int_{|x|\geq 1}e^{\ep^{-1}r}xN_k(drdx)\Big|\nonumber\\&\leq&\mathbb{E}\Big|-e^{-\ep^{-1}(\tau+\delta)}\int_{0}^{\tau+\delta}\int_{|x|\geq 1}e^{\ep^{-1}r}xN_k(drdx)+e^{-\ep^{-1}(\tau+\delta)}\int_{0}^{\tau}\int_{|x|\geq 1}e^{\ep^{-1}r}xN_k(drdx)\Big|\nonumber\\&&+\mathbb{E}\Big|e^{-\ep^{-1}(\tau+\delta)}\int_{0}^{\tau}\int_{|x|\geq 1}e^{\ep^{-1}r}xN_k(drdx)-e^{-\ep^{-1}\tau}\int_{0}^{\tau}\int_{|x|\geq 1}e^{\ep^{-1}r}xN_k(drdx)\Big|\nonumber\\&=:&A+B.
		\end{eqnarray}
		By a direct computation,
		\begin{eqnarray}
			&&A\nonumber\\&=&\mathbb{E}\Big|-e^{-\ep^{-1}(\tau+\delta)}\int_{\tau}^{\tau+\delta}\int_{|x|\geq 1}e^{\ep^{-1}r}xN_k(drdx)\Big|\nonumber\\&\leq&\mathbb{E}\Big[-e^{-\ep^{-1}(\tau+\delta)}\int_{\tau}^{\tau+\delta}\int_{|x|\geq 1}|e^{\ep^{-1}r}x|N_k(drdx)\Big]\nonumber\\&\leq&\mathbb{E}\Big[\int_{\tau}^{\tau+\delta}\int_{|x|\geq 1}|x|N_k(drdx)\Big]\nonumber\\&=&\mathbb{E}\Big[\int_{\tau}^{\tau+\delta}\int_{|x|\geq 1}|x|\nu_k(dx)dr\Big]\nonumber\\&=&\mathbb{E}\Big[\int_{0}^{T}\int_{|x|\geq 1}|x|\mathbb{I}_{[\tau,\tau+\delta]}(r)\nu_1(dx)dr\Big]\nonumber\\&=&\int_{0}^{T}\int_{|x|\geq 1}|x|\mathbb{E}\mathbb{I}_{[\tau,\tau+\delta]}(r)\nu_1(dx)dr\nonumber\\&=&\int_{|x|\geq 1}|x|\nu_1(dx)\int_{0}^{T}\mathbb{E}\mathbb{I}_{[\tau,\tau+\delta]}(r)dr\nonumber\\&=&\int_{|x|\geq 1}|x|\nu_1(dx)\delta\nonumber\\&\lesssim&\delta,
		\end{eqnarray}
		and
		\begin{eqnarray}
			&&B\nonumber\\&=&\mathbb{E}\Big|(e^{-\ep^{-1}\tau}-e^{-\ep^{-1}(\tau+\delta)})\int_{0}^{\tau}\int_{|x|\geq 1}e^{\ep^{-1}r}xN_k(drdx)\Big|\nonumber\\&=&\mathbb{E}\Big|e^{-\ep^{-1}\tau}(1-e^{-\ep^{-1}\delta})\int_{0}^{\tau}\int_{|x|\geq 1}e^{\ep^{-1}r}xN_k(drdx)\Big|\nonumber\\&\leq&\frac{\delta}{\ep}\mathbb{E}\Big|e^{-\ep^{-1}\tau}\int_{0}^{\tau}\int_{|x|\geq 1}e^{\ep^{-1}r}xN_k(drdx)\Big|\nonumber\\&\leq&\frac{\delta}{\ep}\mathbb{E}\Big[\int_{0}^{\tau}\int_{|x|\geq 1}e^{-\ep^{-1}(\tau-r)}|x|N_k(drdx)\Big]\nonumber\\&\leq&\frac{\delta}{\ep}\mathbb{E}\Big[\int_{0}^{\tau}\int_{|x|\geq 1}e^{-\ep^{-1}(\tau-r)}|x|\nu_1(dx)dr\Big]\nonumber\\&=&\frac{\delta}{\ep}\mathbb{E}\Big[\int_{0}^{T}\int_{|x|\geq 1}e^{-\ep^{-1}(\tau-r)}|x|\mathbb{I}_{[0,\tau]}(r)\nu_1(dx)dr\Big]\nonumber\\&=&\frac{\delta}{\ep}\int_{|x|\geq 1}|x|\nu_1(dx)\int_{0}^{T}\mathbb{E}(\mathbb{I}_{[0,\tau]}(r)e^{-\ep^{-1}(\tau-r)})dr\nonumber\\&=&\frac{\delta}{\ep}\int_{|x|\geq 1}|x|\nu_1(dx)\mathbb{E}(e^{-\ep^{-1}\tau}\int_{0}^{\tau}e^{\ep^{-1}r}dr)\nonumber\\&\lesssim&\delta.
		\end{eqnarray}
		By the two estimates above, we have
		\begin{equation}\label{tightJ11k}
			\mathbb{E}|J^{11}_k(\tau+\delta)-J^{11}_k(\tau)|\lesssim\delta.
		\end{equation}
		
		For $J^{12}_k, $by Fubini theorem,
		\begin{eqnarray}\label{tightJ12k}
			&&\mathbb{E}|J^{12}_k(\tau+\delta)-J^{12}_k(\tau)|\nonumber\\&=&\mathbb{E}\Big|\int_{\tau}^{\tau+\delta}\int_{|x|\geq 1}xN_k(dsdx)\Big|\nonumber\\&\leq&\mathbb{E}\int_{\tau}^{\tau+\delta}\int_{|x|\geq 1}|x|N_k(dsdx)\nonumber\\&=&\mathbb{E}\int_{\tau}^{\tau+\delta}\int_{|x|\geq 1}|x|\nu_k(dx)ds\nonumber\\&=&\mathbb{E}\int_{0}^{T}\int_{|x|\geq 1}\mathbb{I}_{[\tau,\tau+\delta]}(s)|x|\nu_1(dx)ds\nonumber\\&=&\int_{0}^{T}\int_{|x|\geq1}\mathbb{E}(\mathbb{I}_{[\tau,\tau+\delta]}(s)|x|)\nu_1(dxds)\nonumber\\&=&\int_{|x|\geq 1}|x|\nu_1(dx)\int_{0}^{T}\mathbb{E}\mathbb{I}_{[\tau,\tau+\delta]}(s)ds\nonumber\\&=&\int_{|x|\geq 1}|x|\nu_1(dx)\mathbb{E}\int_{0}^{T}\mathbb{I}_{[\tau,\tau+\delta]}(s)ds\nonumber\\&\lesssim&\delta.
		\end{eqnarray}
		Combining (\ref{tightJ11k}) and (\ref{tightJ12k}), we have
		\begin{equation}\label{tightJ1k}
			\mathbb{E}|J^1_k(\tau+\delta)-J^1_k(\tau)|\lesssim\ep^{\theta}\delta\leq\delta.
		\end{equation}
		We turn to deal with $J^2_k.$ For notation simplicity, we set $\tilde{L}_k(t):=\intot\int_{|x|< 1}x\tilde{N}_k(dsdx),$ which means $$J^2_k(t)=\intot\int_{0}^{u}\ep^{\theta-1}e^{-\ep^{-1}u}e^{\ep^{-1}h}d\tilde{L}_k(h)du,$$ so that
		$$J^2_k(t)-J^2_k(s)=\int_{s}^{t}\int_{0}^{u}\ep^{\theta-1}e^{-\ep^{-1}u}e^{\ep^{-1}h}d\tilde{L}_k(h)du.$$
		By stochastic Fubini theorem(\cite[Theorem 64]{PRO05}),
		$$J^2_k(t)-J^2_k(s)=\Big[\intos\int_{s}^{t}\ep^{\theta-1}e^{-\ep^{-1}u}e^{\ep^{-1}h}dud\tilde{L}_k(h)+\int_{s}^{t}\int_{h}^{t}\ep^{\theta-1}e^{-\ep^{-1}u}e^{\ep^{-1}h}dud\tilde{L}_k(h)\Big].$$
		Therefore,
		\begin{eqnarray}\label{tightJ2kready}
			&&J^2_k(\tau+\delta)-J^2_k(\tau)\nonumber\\&=&\Big[\int_{0}^{\tau}\int_{\tau}^{\tau+\delta}\ep^{\theta-1}e^{-\ep^{-1}u}e^{\ep^{-1}h}dud\tilde{L}_k(h)+\int_{\tau}^{\tau+\delta}\int_{h}^{\tau+\delta}\ep^{\theta-1}e^{-\ep^{-1}u}e^{\ep^{-1}h}dud\tilde{L}_k(h)\Big]\nonumber\\&=:& J^{21}_k+J^{22}_{k}.
		\end{eqnarray}
		By the definition of $\tilde{L}$, It\^{o} isometry, and H\"older inequality,
		\begin{eqnarray}\label{geforj2k}
			&&\mathbb{E}|J^{21}_k|^2\nonumber\\&=&\mathbb{E}\int_{0}^{\tau}\int_{|x|< 1}\Big|\int_{\tau}^{\tau+\delta}x\ep^{\theta-1}e^{-\ep^{-1}u}e^{\ep^{-1}h}du\Big|^2\nu_1(dx)dh\nonumber\\&=&\int_{|x|< 1}|x|^2\nu_1(dx)\mathbb{E}\int_{0}^{\tau}\Big|\int_{\tau}^{\tau+\delta}\ep^{\theta-1}e^{-\ep^{-1}u}e^{\ep^{-1}h}du\Big|^2dh\nonumber\\&\lesssim&\mathbb{E}\int_{0}^{\tau}\Big|\int_{\tau}^{\tau+\delta}\ep^{\theta-1}e^{-\ep^{-1}u}e^{\ep^{-1}h}du\Big|^2dh\nonumber\\&\leq&\mathbb{E}\int_{0}^{\tau}\Big(\int_{\tau}^{\tau+\delta}\ep^{2\theta-2}e^{-2\ep^{-1}u}e^{2\ep^{-1}h}du\Big)\delta dh\nonumber\\&=&\delta\ep^{2\theta-2}\mathbb{E}\int_{0}^{\tau}\int_{\tau}^{\tau+\delta}e^{-2\ep^{-1}(u-h)}dudh.
		\end{eqnarray}
		A direct calculation yields
		$$\int_{0}^{\tau}\int_{\tau}^{\tau+\delta}e^{-2\ep^{-1}(u-h)}dudh\lesssim\ep^2,$$ so that
		\begin{equation}\label{tightJ21k}
			\mathbb{E}|J^{21}_k|^2\lesssim\delta\ep^{2\theta}\leq\delta.
		\end{equation}
		For $J^{22}_k,$ by It\^{o} isometry,
		\begin{eqnarray}
			&&\mathbb{E}|J^{22}_k|^2\nonumber\\&=&\mathbb{E}|\int_{\tau}^{\tau+\delta}\int_{h}^{\tau+\delta}\ep^{\theta-1}e^{-\ep^{-1}u}e^{\ep^{-1}h}du\tilde{L}_k(h)|^2\nonumber\\&=&\mathbb{E}\int_{\tau}^{\tau+\delta}\int_{|x|< 1}\Big|\int_{h}^{\tau+\delta}x\ep^{\theta-1}e^{-\ep^{-1}u}e^{\ep^{-1}h}du\Big|^2\nu_1(dx)dh\nonumber\\&=&\int_{|x|< 1}|x|^2\nu_1(dx)\mathbb{E}\int_{\tau}^{\tau+\delta}\Big|\int_{h}^{\tau+\delta}\ep^{\theta-1}e^{-\ep^{-1}u}e^{\ep^{-1}h}du\Big|^2dh.
		\end{eqnarray}
		By a direct calculation,$$\int_{h}^{\tau+\delta}\ep^{\theta-1}e^{-\ep^{-1}u}e^{\ep^{-1}h}du\leq\ep^\theta,$$so
		$$\mathbb{E}|J^{22}_k|^2\lesssim\int_{|x|< 1}|x|^2\nu_1(dx)\ep^{2\theta}\delta\lesssim\delta.$$
		Together with (\ref{tightJ21k}),
		\begin{equation*}
			\mathbb{E}|J^2_k(\tau+\delta)-J^2_k(\tau)|^2\lesssim\delta.
		\end{equation*}
		By Jensen inequality,
		\begin{equation}\label{tightJ2k}
			\mathbb{E}|J^2_k(\tau+\delta)-J^2_k(\tau)|\lesssim\sqrt{\delta}.
		\end{equation}
		Substituting (\ref{tightJ1k}) and (\ref{tightJ2k}) into (\ref{tightJkready}),
		\begin{equation}\label{tightJk}
			\mathbb{E}|J^\ep_k(\tau+\delta)-J^\ep_k(\tau)|\lesssim\sqrt{\delta},
		\end{equation}
		from which we derive by (\ref{tightI3ready})
		\begin{equation}\label{tightI3}
			\mathbb{E}|I^\ep_3(\tau+\delta)-I^\ep_3(\tau)|\lesssim\sqrt{\delta}.
		\end{equation}
		The proof is finished by combining (\ref{tightI1}), (\ref{tightI2}) and (\ref{tightI3}).
	\end{proof}
	\section{Proof of the Main Result}\label{proofofmain}
	This section devotes to proving Theorem \ref{mainresult}. We begin with treating velocity part $\vbe$.
	\begin{lemma}
		$\mathbb{E}\supt|\ep^{-1}\intot\bvbe_1(s)ds|\lesssim \ep.$\label{esforbv1}
	\end{lemma}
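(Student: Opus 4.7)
The plan is to solve the first equation of the decomposition (\ref{bveq}) explicitly, since it is a deterministic linear ODE with no noise term. Specifically, $\bar{V}_1^\epsilon$ satisfies
\begin{equation*}
\dot{\bar{V}}_1^\epsilon(t) = -\epsilon^{-1}\bar{V}_1^\epsilon(t), \qquad \bar{V}_1^\epsilon(0) = \epsilon v_0,
\end{equation*}
whose unique solution is $\bar{V}_1^\epsilon(t) = \epsilon v_0\, e^{-t/\epsilon}$.

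Next I would integrate directly:
\begin{equation*}
\int_0^t \bar{V}_1^\epsilon(s)\,ds = \epsilon v_0 \int_0^t e^{-s/\epsilon}\,ds = \epsilon^2 v_0 \bigl(1 - e^{-t/\epsilon}\bigr).
\end{equation*}
Multiplying by $\epsilon^{-1}$ gives $\epsilon^{-1}\int_0^t \bar{V}_1^\epsilon(s)\,ds = \epsilon v_0(1 - e^{-t/\epsilon})$, whose absolute value is bounded by $\epsilon |v_0|$ uniformly in $t \in [0,T]$. Since $v_0$ is a deterministic initial datum, taking the supremum and then the expectation yields the required bound $\mathbb{E}\sup_{0\leq t\leq T}|\epsilon^{-1}\int_0^t \bar{V}_1^\epsilon(s)\,ds| \leq \epsilon |v_0| \lesssim \epsilon$.

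There is no real obstacle here: because the first component of the split system was designed precisely to absorb the initial velocity $v_0$ and to carry no noise, the estimate reduces to explicit exponential decay. The lemma is essentially a sanity check confirming that the contribution of the transient term $\bar{V}_1^\epsilon$ in the decomposition (\ref{decomposition}) is $O(\epsilon)$ after being integrated against time, and is therefore negligible when comparing $U^\epsilon$ with the averaged limit $\bar{U}^\epsilon$.
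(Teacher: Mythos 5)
Your proposal is correct and follows essentially the same route as the paper: solve the linear ODE explicitly to get $\bvbe_1(t)=\ep v_0 e^{-\ep^{-1}t}$, integrate the exponential, and bound the result by $\ep|v_0|$ uniformly in $t\in[0,T]$ before taking supremum and expectation. No gaps.
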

	\begin{proof}
		From (\ref{bveq})  
		\begin{equation}
			\bvbe_1(t)=\ep v_0 e^{-\ep^{-1}t}.\label{precisebv1}
		\end{equation}
		As a consequence, for all $0\leq t\leq T$\,, 
		\begin{eqnarray}
			\Big|\ep^{-1}\intot \bvbe_1(s)ds\Big|&=&\Big|\intot v_0 e^{-\ep^{-1}s}ds\Big|\nonumber\\
			&\leq&|v_0|\int_{0}^{T}e^{-\ep^{-1}s}ds\nonumber\\
			&\leq &\ep|v_0|.
		\end{eqnarray}
		Taking supremum and expectation yields the result. 
	\end{proof}
	
	\begin{lemma}\label{esforbv2}
		$$\supe\mathbb{E}\supt|\bvbe_2(t)|<\infty$$
	\end{lemma}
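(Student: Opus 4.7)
The plan is to solve the linear ODE for $\bvbe_2$ explicitly and then bound the resulting convolution integral using the Lipschitz growth of $f$ together with the uniform moment estimate from Proposition \ref{thmforEsup}.

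From the third line of (\ref{bveq}), $\bvbe_2$ satisfies the linear ODE
\begin{equation*}
\dbvs(t)=-\ep^{-1}\bvbe_2(t)+\ep^{-1}f(\ube(t)),\quad \bvbe_2(0)=0,
\end{equation*}
which, by variation of parameters, has the explicit solution
\begin{equation*}
\bvbe_2(t)=\ep^{-1}\intot e^{-\ep^{-1}(t-s)}f(\ube(s))\,ds.
\end{equation*}
Since $f$ is globally Lipschitz by assumption $(\mathbf{A})$, we have $|f(x)|\lesssim 1+|x|$, so
\begin{equation*}
|\bvbe_2(t)|\leq \ep^{-1}\intot e^{-\ep^{-1}(t-s)}|f(\ube(s))|\,ds \lesssim \bigl(\supt|\ube(t)|+1\bigr)\cdot \ep^{-1}\intot e^{-\ep^{-1}(t-s)}\,ds.
\end{equation*}
The elementary bound $\ep^{-1}\intot e^{-\ep^{-1}(t-s)}\,ds=1-e^{-\ep^{-1}t}\leq 1$ is uniform in $\ep$ and $t$, hence
\begin{equation*}
\supt|\bvbe_2(t)|\lesssim \supt|\ube(t)|+1.
\end{equation*}

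Taking expectation and applying Proposition \ref{thmforEsup} gives $\supe\mathbb{E}\supt|\bvbe_2(t)|<\infty$, as required. There is no real obstacle here: the only point worth noticing is that the exponential kernel has $L^1$-mass bounded by one independently of $\ep$, which is precisely what prevents the prefactor $\ep^{-1}$ from blowing up and reduces the estimate to the already-established uniform bound on $\ube$.
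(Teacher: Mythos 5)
Your proof is correct and follows essentially the same route as the paper: both solve the linear equation for $\bvbe_2$ by variation of parameters, use the linear growth of $f$ from assumption $(\mathbf{A})$, bound the exponential kernel's mass by $1$ uniformly in $\ep$, and conclude with Proposition \ref{thmforEsup}. Nothing is missing.
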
	
	\begin{proof}
		From (\ref{bveq}),
		\begin{equation}
			\bvbe_2(t)=\ep^{-1}e^{-\ep^{-1}t}\intot e^{\ep^{-1} s}f(\ube(s))ds.\label{precisebv2}
		\end{equation}
		Thanks to the Lipschitz continuity of $f$, for all $0\leq t\leq T$
		\begin{eqnarray}
			&&|\bvbe_2(t)|\nonumber\\&\lesssim&\ep^{-1}e^{-\ep^{-1}t}\intot e^{\ep^{-1} s}|\ube(s)|ds+\ep^{-1}e^{-\ep^{-1}t}\intot e^{\ep^{-1} s}ds\nonumber\\&\leq&\ep^{-1}e^{-\ep^{-1}t}\supt|\ube(t)|\intot e^{\ep^{-1} s}ds+1\nonumber\\&\leq &\supt|\ube(t)|+1,
		\end{eqnarray}
		and the result follows from Proposition \ref{thmforEsup} after taking expectation.
		
	\end{proof}
	
	We also need a lemma which states that the convergence in Skorokhod topology is stronger than $L^1$ convergence.
	\begin{lemma}\label{skorokhod}
		Let $(x_n)$ be a sequence in $\mathbb{D}$. If $\lim\limits_{n\to\infty}d^o(x_n,x)=0,$ then $x_n\to x$ in $L^1([0,T];\mathbb{R}^d).$
	\end{lemma}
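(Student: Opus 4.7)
The plan is to unpack the definition of $d^o$ and reduce the claim to the Dominated Convergence Theorem. Since $d^o(x_n,x)\to 0$, by the infimum definition I can extract $\lambda_n\in\Lambda$ satisfying both $\|\lambda_n\|^o\to 0$ and $\|x_n-x\circ\lambda_n\|_\infty\to 0$ simultaneously (choosing $\lambda_n$ within $d^o(x_n,x)+1/n$ of the infimum). The triangle inequality then splits the target integral as
\begin{equation*}
\int_0^T|x_n(t)-x(t)|\,dt\leq T\,\|x_n-x\circ\lambda_n\|_\infty+\int_0^T|x(\lambda_n(t))-x(t)|\,dt,
\end{equation*}
so the first summand vanishes automatically and the entire task reduces to controlling the second summand.

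For the second summand, I would first show that $\lambda_n\to\mathrm{id}$ uniformly on $[0,T]$. Because every $\lambda\in\Lambda$ is a strictly increasing continuous bijection on $[0,T]$, we have $\lambda_n(0)=0$ and $\lambda_n(T)=T$. Taking $s=0$ in the definition of $\|\cdot\|^o$ gives $|\log(\lambda_n(t)/t)|\leq\|\lambda_n\|^o$ for $t>0$, hence
\begin{equation*}
\sup_{0\leq t\leq T}|\lambda_n(t)-t|\leq T\bigl(e^{\|\lambda_n\|^o}-1\bigr)\longrightarrow 0.
\end{equation*}

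Next, because $x\in\mathbb{D}$ is c\`adl\`ag on the compact interval $[0,T]$, it is bounded and its set of discontinuities is at most countable, hence Lebesgue-null. At any continuity point $t$ of $x$, the convergence $\lambda_n(t)\to t$ gives $x(\lambda_n(t))\to x(t)$ regardless of whether $\lambda_n(t)$ approaches $t$ from the left or the right. Thus $|x(\lambda_n(t))-x(t)|\to 0$ for Lebesgue-almost every $t\in[0,T]$, and the integrand is dominated by the constant $2\|x\|_\infty<\infty$. Dominated convergence then finishes the argument.

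The proof is largely formal; the only genuine subtlety is the pointwise convergence at continuity points, where one must observe that c\`adl\`ag regularity rules out the bad case in which $\lambda_n(t)$ approaches $t$ from below at a jump point on a set of positive Lebesgue measure, and this is handled cleanly by the fact that jump points form a Lebesgue-null set. No estimate beyond boundedness of $x$ on $[0,T]$ is required, so no serious obstacle is expected.
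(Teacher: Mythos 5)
Your proof is correct and follows essentially the same route as the paper: choose time-changes $\lambda_n$ nearly attaining the infimum, split $\int_0^T|x_n-x|$ by the triangle inequality into $\int_0^T|x_n-x\circ\lambda_n|$ and $\int_0^T|x\circ\lambda_n-x|$, kill the first term by uniform closeness, and handle the second via a.e.\ convergence at the (co-countable set of) continuity points of the c\`adl\`ag, hence bounded, function $x$ together with dominated convergence. The only cosmetic difference is that you obtain $\sup_{0\leq t\leq T}|\lambda_n(t)-t|\leq T\bigl(e^{||\lambda_n||^o}-1\bigr)\to 0$ directly from the definition of $d^o$, whereas the paper invokes the equivalence of $d^o$ with the metric $d$ (Billingsley, Theorem 12.1) to get the same uniform convergence $||\lambda_n-\mathrm{id}||_\infty\to 0$; your version is marginally more self-contained but the decomposition and the key measure-theoretic lemma are identical.
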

	\begin{proof}
		For $x,y\in\mathbb{D},$ define $$d(x,y):=\inf\limits_{\lambda\in\Lambda}\max\{||\lambda-id||_\infty,||x-y\circ\lambda||\},$$
		where $id$ is the identity map on $[0,T]$ and $\Lambda$ is defined in Section \ref{preli}. By \cite[Theorem 12.1]{BIL13}, $d^o$ and $d$ are equivalent, so $\lim\limits_{n\to\infty}d(x_n,x)=0$, and by the definition of $d$, there exists a sequence $(\lambda_n)$ in $\Lambda$, such that 
		\begin{equation}\label{resultofconvergence}
			\lim_{n\to\infty}||x_n-x\circ\lambda_n||_\infty=0,\quad\lim_{n\to\infty}||\lambda_n-id||_\infty=0.
		\end{equation}
		In particular, 
		\begin{equation*}
			\sup_{n\in\mathbb{N}}\supt|x_n(t)-x(\lambda_n(t))|=\sup_{n\in\mathbb{N}}||x_n-x\circ\lambda_n||_\infty<\infty.
		\end{equation*}
		By bounded convergence theorem and (\ref{resultofconvergence}),
		\begin{equation}\label{L1convergence1}
			\lim_{n\to\infty}\int_0^T|x_n(t)-x\circ\lambda_n(t)|dt=0.
		\end{equation}
		Since $x$ is c\`adl\`ag, it has at most countable discontinuity, so that by (\ref{resultofconvergence}),
		\begin{equation}\label{integrandconvergence}
			\lim_{n\to\infty}|x(\lambda_n(t))-x(t)|=0,\quad\text{a.e.}.
		\end{equation}
		Using the c\`adl\`ag property again, $x$ is bounded, so by bounded convergence theorem and (\ref{integrandconvergence}),
		\begin{equation}\label{L1convergence2}
			\lim_{n\to\infty}\int_{0}^{T}|x\circ\lambda_n(t)-x(t)|dt=0,
		\end{equation}
		from which the lemma follows from noticing that
		$$|x_n(t)-x(t)|\leq|x_n(t)-x\circ\lambda_n(t)|+|x\circ\lambda_n(t)-x(t)|$$ and (\ref{L1convergence1}).
	\end{proof}

	With the preparation made above, we are in a position to prove our main result. From (\ref{slowfast}) and~(\ref{decomposition}) we have 
	\begin{equation}
		\ube(t)=u_0+\ep^{-1}\intot\bvbe_1(s)ds+\intot\bvbe_2(s)ds+\ep^{\theta+\frac{1}{\al}-1}\intot\bvbe_3(s)ds.
	\end{equation}
	By (\ref{bveq}), 
	\begin{equation*}
		\bvbe_2(t)=\intot -\ep^{-1}[\bvbe_2(s)-f(\ube(s))]ds.
	\end{equation*}
	Combining the two equations above,
	\begin{equation}
		\ube(t)=u_0+\ep^{-1}\intot\bvbe_1(s)ds+\intot f(\ube(s))ds-\ep\bvbe_2(t)+\ep^{\theta+\frac{1}{\al}-1}\intot\bvbe_3(s)ds.\label{newequationforu}
	\end{equation}
	From (\ref{aveq}) and (\ref{newequationforu}) we deduce that 
	\begin{eqnarray}
 &&|\ube(t)-\bube(t)|\nonumber\\ &\leq& \Big|\ep^{-1}\intot\bvbe_1(s)ds\Big|+\Big|\intot f(\ube(s))-f(\bube(s))ds\Big|+\ep|\bvbe_2(t)|+\Big|\ep^{\theta+\frac{1}{\al}-1}\intot\bvbe_3(s)ds-\ep^\al L(t)\Big| \nonumber\\&=:& \sum_{k=1}^{4}I^\ep_k(t). \label{generaldifference}
	\end{eqnarray}
	As a consequence of Lemma \ref{esforbv1}, 
	\begin{equation}
		\mathbb{E}\supt I^\ep_1(t)\lesssim\ep. \label{esforI1}
	\end{equation}
	Due to the fact that $f$ is Lipschitz, for all $0\leq t\leq T$,
	\begin{eqnarray*}
			&&\Big|\intot f(\ube(s))-f(\bube(s))ds\Big|\\ &\leq &\intot |f(\ube(s))-f(\bube(s))|ds\\&\leq&\int_{0}^{T} \sup\limits_{0\leq r\leq s}|f(\ube(r))-f(\bube(r))|ds\\ &\lesssim &\int_{0}^{T} \sup\limits_{0\leq r\leq s}|\ube(r)-\bube(r)|ds.
	\end{eqnarray*}
	Taking supremum and expectation and using Fubini theorem,
	\begin{equation}
		\mathbb{E}\supt I^\ep_2(t)\lesssim \int_{0}^{T}\mathbb{E}\sup\limits_{0\leq s\leq t}|\ube(s)-\bube(s)|dt. \label{esforI2}
	\end{equation}
	From Lemma \ref{esforbv2},
	\begin{equation}\label{esforI3}
		\mathbb{E}\supt I^\ep_3(t)\lesssim\ep.
	\end{equation}
	Lastly we deal with $I^\ep_4$\,. From (\ref{bveq}), 
	\begin{equation}
		\ep^{\frac{1}{\al}}\bvbe_3(t)=L(t)-\ep^{\frac{1}{\al}-1}\intot \bvbe_3(s)ds,\label{newI3}
	\end{equation}
	but one finds that $\bvbe_3$ coincides with $\we$ defined in (\ref{defofw}), so that $\ep^\frac{1}{\al}\bvbe_3$ coincides with $\bwe.$ Therefore, (\ref{esforbwe}) implies that there exists a constant $C>0$ such that for all $0<\ep\leq 1$, 
	$$\mathbb{E}\supt|L(t)-\ep^{\frac{1}{\al}-1}\intot \bvbe_3(s)ds|\leq C.$$
	Multiplying both sides by $\ep^\theta$,
	\begin{equation}
		\mathbb{E}\supt I^\ep_4(t)=\mathbb{E}\supt|\ep^\theta L(t)-\ep^{\theta+\frac{1}{\al}-1}\intot \bvbe_3(s)ds|\leq C\ep^\theta.\label{esforI4}
	\end{equation}
	Taking supremum and expectation on both sides of (\ref{generaldifference}) and combining (\ref{esforI1})--(\ref{esforI4}), 
	\begin{eqnarray}
		&&\mathbb{E}\supt|\ube(t)-\bube(t)|\nonumber\\ &\lesssim& \int_{0}^{T}\mathbb{E}\sup\limits_{0\leq s\leq t}|\ube(s)-\bube(s)|dt+\ep+\ep^\theta\nonumber\\ &\leq&\int_{0}^{T}\mathbb{E}\sup\limits_{0\leq s\leq t}|\ube(s)-\bube(s)|dt+\ep^\theta,
	\end{eqnarray}
	where the last inequality follows from the fact that $0\leq\theta<1.$
	By Gronwall inequality,
	$$\mathbb{E}\supt|\ube(t)-\bube(t)|\lesssim \ep^\theta,$$
	and we finish the proof for part (i) of Theorem \ref{mainresult}.
	
	Next we turn to the proof of part (ii) of Theorem \ref{mainresult}. Firstly we show that $(\ube)$ converges in probability as $\ep\to 0$. By the classical result in \cite{GK96}, it is sufficient to prove that for any two subsequences $\{\ep(n)\}_{n\in\mathbb{N}}$ and $\{\mu(n)\}_{n\in\mathbb{N}}$ with $\ep(n)\to 0$ and $\mu(n)\to0,$ there exist subsequences $\{\ep(n_k)\}$ and $\{\mu(n_k)\}$ such that $(U^{\ep(n_k)},U^{\mu(n_k)})$ converges weakly to a $\mathbb{D}^2$-valued random variable $w=(w_1,w_2)$, and $w$ supports on the diagonal.
	
	Now given two subsequences $\{\ep(n)\}$ and $\{\mu(n)\}$ with $\ep(n)\to 0$ and $\mu(n)\to0,$ since $(\ube)_{0<\ep\leq 1}$ is  tight, by Prokhorov and Skorokhod theorem, there exist
	
	(i) a probability space $(\hat{\Omega},\hat{\mathcal{F}},\hat{\mathbb{P}}),$
	
	(ii) a sequence of $\mathbb{D}^3$-valued random variable $(u_1^k,u_2^k,\hat{L}_k)$ defined on $(\hat{\Omega},\hat{\mathcal{F}},\hat{\mathbb{P}}),$
	
	(iii) a $\mathbb{D}^3$-valued random variable $(u_1,u_2,\hat{L})$ defined on $(\hat{\Omega},\hat{\mathcal{F}},\hat{\mathbb{P}}),$
	
	\noindent such that 
	
	(i) $(u_1^k,u_2^k,\hat{L}_k)\overset{d}{=}(U^{\ep(n_k)},U^{\mu(n_k)},L),$
	
	(ii) $(u_1^k,u_2^k,\hat{L}_k)\to (u_1,u_2,\hat{L})$, $\hat{\mathbb{P}}-$ a.s..
	
	Define
	\begin{equation}\label{defofR}
		R^\ep(t):=\ube(t)-u_0-\intot f(\ube(s))ds-L(t),
	\end{equation}
	then by (\ref{newequationforu}),
	\begin{eqnarray}\label{newequationforR}
		&&R^\ep(t)\nonumber\\&=&\ep^{-1}\intot\bvbe_1(s)ds-\ep\bvbe_2(t)+\Big(\ep^{\frac{1}{\al}-1}\intot\bvbe_3(s)ds-L(t)\Big).
	\end{eqnarray}
	(\ref{esforI1}) and (\ref{esforI3}) indicate that
	$$\mathbb{E}\Big|\ep^{-1}\intot\bvbe_1(s)ds\Big|\lesssim\ep,$$
	and
	$$\ep\mathbb{E}|\bvbe_2(t)|\lesssim\ep,$$
	respectively. As we have pointed out in the context of (\ref{newI3}),
	$$\Big|\ep^{\frac{1}{\al}-1}\intot\bvbe_3(s)ds-L(t)\Big|=|\ep^{\frac{1}{\al}}\bvbe_3(t)|,$$
	and that $\ep^{\frac{1}{\al}}\bvbe_3(t)=\bwe(t)$ with
	$$d\bwe(t)=-\ep^{-1}\bwe(t)+dL(t),\quad \bwe(0)=0,$$
	so it is sufficient to estimate $\mathbb{E}|\bwe(t)|.$ Since
	\begin{equation}\label{precisebwe}
		 \bwe(t)=\intot e^{-\ep^{-1}(t-s)}dL(s),
	\end{equation}
	we have by \cite[Theorem 3.2]{RW06}
	\begin{eqnarray*}
		&&\mathbb{E}|\bwe(t)|\\&\lesssim&\Big(\intot|e^{-\ep^{-1}(t-s)}|^\al ds\Big)^{1/\al}\\&=&\Big(\intot|e^{-\ep^{-1}s}|^\al ds\Big)^{1/\al}\\&=&\Big[\frac{\ep}{\al}(1-e^{-\ep^{-1}\al t})\Big]^{1/\al},
	\end{eqnarray*}
	so that
	\begin{eqnarray*}
		\lim_{\ep\to0}\mathbb{E}\Big|\ep^{\frac{1}{\al}-1}\intot\bvbe_3(s)ds-L(t)\Big|=\lim_{\ep\to0}\mathbb{E}(\ep^\frac{1}{\al}|\bvbe_3(t)|)=\lim_{\ep\to0}\mathbb{E}|\bwe(t)|=0.
	\end{eqnarray*}
	The estimates above imply that for each $0\leq t\leq T,$
	\begin{equation}
		\lim_{\ep\to0}\mathbb{E}|R^\ep(t)|=0.
	\end{equation}
	As in (\ref{defofR}), set for $i=1,2$ and $k\in\mathbb{N},$ 
	$$R^k_i(t):=u^k_i(t)-u_0-\intot f(u^k_i(s))ds-\hat{L}_k(t).$$
	Since $(u_1^k,u_2^k,\hat{L}_k)\overset{d}{=}(U^{\ep(n_k)},U^{\mu(n_k)},L),$ denote $\hat{\mathbb{E}}$ to be the expectation with respect to $\hat{\mathbb{P}},$
	$$\lim\limits_{k\to\infty}\hat{\mathbb{E}}|R^k_i(t)|=0,$$ so there is a subsequence converges to $0$ almost surely , and we still denote it by $R^k_i(t)$.
	
	Set $\mathcal{D}_i:=\{t\in[0,T]:P(u_i(t)\neq u_i(t-))>0\}$ and $\mathcal{D}:=\mathcal{D}_1\cup\mathcal{D}_2$. From the c\`{a}dl\`ag property of $u_i$, $\mathcal{D}$ is at most countable\cite[Lemma 7.7 in Chapter 3]{EK09}. Since $u^k_i\to u_i$ in Skorokhod topology almost surely and $f$ is Lipschitz, it is straightforward to check that, for each $t\notin\mathcal{D},$ $u^k_i(t)\to u_i(t),$ and that $f(u^k_i)\to f(u_i)$ in Skorokhod topology almost surely. On applying Lemma \ref{skorokhod}, we conclude that
	\begin{equation}\label{heatequation}
		u_i(t)-u_0-\intot f(u_i(s))ds-\hat{L}(t)=0,
	\end{equation}
	$i=1,2$ for each $t\notin\mathcal{D},$ $\hat{\mathbb{P}}-a.s..$ The c\`adl\`ag property impies that the above equality holds for all $0\leq t\leq T,$ $\hat{\mathbb{P}}-a.s.,$ so $u_1=u_2$ due to the pathwise uniqueness of the first-order SDE. Therefore, $(\ube)$ converges in probability to some $\bar{U}$ in $\mathbb{D}$\cite{GK96}. But by the same argument we derive that $\bar{U}$ again satisfies (\ref{heatequation}). The proof is finished. \qed\\
	
	We show that the stronger convergence $$\lim\limits_{\ep\rightarrow 0}\mathbb{E}\supt|\ube(t)-\bar{U}(t)|=0,$$ is not true. Indeed, as we point out in Remark \ref{nottightinC}, $\ube\in\mathbb{C}$ for each $0<\ep\leq1.$ By contradictory, suppose that the stronger convergence above is true, then there exists a subsequence $(U^{\ep(k)})$ such that $$\lim\limits_{k\rightarrow \infty}\supt|U^{\ep(k)}(t)-\bar{U}(t)|=0,\quad\mathbb{P}-\text{a.s..}$$ In particular, $\bar{U}$ is also continuous. But we showed in Remark \ref{nottightinC} that $\bar{U}$ is not continuous in general. 
	
	Now we are ready to give a corollary, which is independent of our topic, but whose proof seems not obvious without the discussion above.
	\begin{coro}
		For the Ornstein-Uhlenbeck process
		\begin{equation}\label{OU}
			Z^\ep(t):=\intot e^{-\ep^{-1}(t-s)}dL(s),
		\end{equation}
		or equivalently,
		\begin{equation}\label{OU2}
			dZ^\ep(t)=-\ep^{-1}Z^\ep(t)+dL(t),\quad Z^\ep(0)=0,
		\end{equation}
		we have
		\begin{equation}
			\limsup_{\ep\to0}\mathbb{E}\supt|Z^\ep(t)|>0.
		\end{equation}
	\end{coro}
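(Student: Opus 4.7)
The plan is to read the corollary off of the failure of uniform convergence already observed in Remark \ref{nottightinC} and at the end of Section \ref{proofofmain}, by specialising the whole setup to the trivial choice
$$f\equiv 0,\qquad u_0=0,\qquad v_0=0,\qquad \theta=0.$$
With these parameters the decomposition (\ref{bveq}) collapses: $\bvbe_1\equiv 0$ and $\bvbe_2\equiv 0$, so (\ref{newequationforu}) reduces to $\ube(t)=\ep^{\frac{1}{\al}-1}\intot\bvbe_3(s)ds$. Invoking (\ref{newI3}) gives $\ube(t)=L(t)-\ep^{\frac{1}{\al}}\bvbe_3(t)=L(t)-\bwe(t)$. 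But $\bwe$ satisfies the linear SDE just above (\ref{esforbwe}), which is identical to (\ref{OU2}); by strong uniqueness $\bwe=Z^\ep$. On the other hand, the averaged equation (\ref{aveq}) becomes $\dot{\bar U}=\dot L$ with $\bar U(0)=0$, so $\bar U=L$. Therefore in this regime
$$\ube(t)-\bar U(t)=-Z^\ep(t)\qquad\text{for every }0\le t\le T.$$

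Now I would argue by contradiction. Assume, toward a contradiction, that $\limsup_{\ep\to 0}\mathbb{E}\supt|Z^\ep(t)|=0$. Because the quantity is non-negative, this forces $\lim_{\ep\to 0}\mathbb{E}\supt|Z^\ep(t)|=0$, and the identity just displayed would give
$$\lim_{\ep\to 0}\mathbb{E}\supt|\ube(t)-\bar U(t)|=0.$$
Passing to an a.s.\ convergent subsequence, we would obtain a uniform a.s.\ limit. Since each $\ube$ has continuous paths (as remarked in Remark \ref{nottightinC}, $\ube$ is the Lebesgue integral of $\vbe$), the uniform limit $\bar U=L$ would have continuous paths almost surely. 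This contradicts the fact that the $\al$-stable L\'evy process $L$ has a.s.\ discontinuous paths on $[0,T]$, which is precisely the contradiction already used at the end of Section \ref{proofofmain}. Hence $\limsup_{\ep\to 0}\mathbb{E}\supt|Z^\ep(t)|>0$, as claimed.

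The substantive work has already been done earlier in the paper; the only step that needs care is the algebraic identification $\ube(t)=L(t)-Z^\ep(t)$ under the chosen parameters, after which the contradiction is immediate. No new estimates or tightness arguments are required.
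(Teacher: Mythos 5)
Your proposal is correct, and it reaches the conclusion by a genuinely different (more elementary) route than the paper. The paper keeps $f$, $u_0$, $v_0$ general: it feeds the quantity $\mathbb{E}\supt|\ep^{\frac{1}{\al}}\bvbe_3(t)|$ into the Gronwall estimate built from (\ref{generaldifference})--(\ref{newI3}), so that (\ref{star}) would imply the stronger convergence (\ref{stronger}), and then quotes the fact, established at the end of Section \ref{proofofmain} via Remark \ref{nottightinC}, that (\ref{stronger}) fails. You instead specialise immediately to $f\equiv0$, $u_0=v_0=0$, $\theta=0$, where $\bvbe_1\equiv\bvbe_2\equiv0$ and the decomposition degenerates to the exact identities $\ube=L-Z^\ep$ and $\bube=L$; this bypasses Gronwall's inequality and Lemmas \ref{esforbv1} and \ref{esforbv2} entirely, and the final contradiction (uniform a.s.\ limits of the continuous paths $\ube$ would force $L$ to be continuous) is the same one the paper uses. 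Your version is more self-contained, and it also makes explicit a point the paper leaves implicit: the failure of (\ref{stronger}) is actually demonstrated only for the special data $f=0$, $u_0=0$ of Remark \ref{nottightinC}, which is harmless because (\ref{star}) does not involve $f$ at all; by working directly in that special case you sidestep this bookkeeping. The only small item worth adding to your write-up is a one-line justification that $L$ is not a.s.\ continuous on $[0,T]$ (the L\'evy measure of an $\al$-stable process is nontrivial, so jumps occur with positive probability), though even $\mathbb{P}(L\in\mathbb{C}([0,T];\mathbb{R}^d))<1$, which the paper already invokes, is enough for the contradiction.
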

	\begin{proof}
		From (\ref{OU2}) we see that $$Z^\ep=\ep^{\frac{1}{\al}}\bvbe_3.$$ From (\ref{generaldifference})--(\ref{newI3}), 
		$$\mathbb{E}\supt|\ube(t)-\bube(t)|\lesssim \int_{0}^{T}\mathbb{E}\sup\limits_{0\leq s\leq t}|\ube(s)-\bube(s)|dt+\ep+\mathbb{E}\supt|\ep^{\frac{1}{\al}}\bvbe_3(t)|\,.$$
		Then we have
		\begin{equation}\label{stronger}
			\lim\limits_{\ep\rightarrow 0}\mathbb{E}\supt|\ube(t)-\bube(t)|=0
		\end{equation}
		  by Gronwall inequality, provided that
		\begin{equation}
			\lim\limits_{\ep\rightarrow 0}\mathbb{E}\supt|\ep^{\frac{1}{\al}}\bvbe_3(t)|=0.\label{star}
		\end{equation}
		However, we know that (\ref{stronger}) is not true, and so is (\ref{star}). 
	\end{proof}

	\bibliographystyle{abbrv}
	\addcontentsline{toc}{section}{References}
	\bibliography{bibli}	
	
\end{document}